\newcommand{\N}{\mathbb{N}}
\newcommand{\A}{\mathbb{A}}
\newcommand{\Z}{\mathbb{Z}}
\newcommand{\mB}{\mathcal{B}}
\newcommand{\mH}{\mathcal{H}}
\newcommand{\mM}{\mathcal{M}}
\newcommand{\mK}{\mathcal{K}}
\newcommand{\mU}{\mathcal{U}}
\newcommand{\mT}{\mathcal{T}}
\newcommand{\IM}{I_{\mM}}
\newcommand{\mG}{\mathcal{G}}
\theoremstyle{plain}
\newtheorem{theorem}{Theorem}[section]
\newtheorem{lemma}[theorem]{Lemma}
\newtheorem{proposition}[theorem]{Proposition}
\theoremstyle{definition}
\newtheorem{example}[theorem]{Example}
\theoremstyle{remark}
\title{ Matroid toric ideals: complete intersection, minors and minimal systems of generators}
\address{Université de Montpellier, Institut Montpelliérain Alexander Grothendieck, Case Courrier 051, Place Eugène Bataillon, 34095 Montpellier Cedex 05, France}
\author[I. Garc\'{i}a-Marco]{Ignacio Garc\'{i}a-Marco\,*}
\thanks{* Corresponding Author: Phone: +33-624564368. Email: ignaciogarciamarco@gmail.com
\\
\indent The authors were supported by the ANR TEOMATRO grant
ANR-10-BLAN 0207.
\\
\indent The first author was partially supported by Ministerio de
Educación y Ciencia - España MTM2010-20279- C02-02.}
\email{ignaciogarciamarco@gmail.com}
\author[J.~L. Ram\'{i}rez Alfons\'{i}n]{Jorge Luis Ram\'{i}rez Alfons\'{i}n}
\email{jorge.ramirez-alfonsin@umontpellier.fr}
\keywords{Matroid, toric ideal, complete intersection, minimal set
of generators, minor, binary matroid}
\subjclass[2010]{05B35, 20M25, 05E40, 14M25}
\date{September, 2015}
\begin{document}

\begin{abstract}
In this paper, we investigate three problems concerning the toric
ideal associated to a matroid. Firstly, we list all matroids $\mM$
such that its corresponding toric ideal $\IM$ is a complete
intersection. Secondly, we handle the problem of detecting minors of
a matroid $\mM$ from a minimal set of binomial generators of $\IM$.
In particular, given a minimal set of binomial generators of $\IM$
we provide a necessary condition for $\mM$ to have a minor
isomorphic to $\mU_{d,2d}$ for $d \geq 2$. This condition is proved
to be sufficient for $d = 2$ (leading to a criterion for determining
whether $\mM$ is binary) and for $d = 3$. Finally, we characterize
all matroids $\mM$ such that $\IM$ has a unique minimal set of
binomial generators.
\end{abstract}

\maketitle

\section{Introduction}\label{introduction}

Let $\mM$ be a matroid on a finite ground set $E = \{1,\ldots,n\}$,
we denote by $\mB$ the set of bases of $\mM$. Let $k$ be an
arbitrary field and consider $k[x_1,\ldots,x_n]$ a polynomial ring
over $k$. For each base $B \in \mB$, we introduce a variable $y_B$
and we denote by $R$ the polynomial ring in the variables $y_B$,
i.e., $R := k[y_B \, \vert \, B \in \mB]$. A {\it binomial} in $R$
is a difference of two monomials, an ideal generated by binomials is
called a {\it binomial ideal}.

\medskip We consider the homomorphism of $k$-algebras
 $ \varphi: R \longrightarrow k[x_1,\dots ,x_n]$ induced
by  $y_B  \mapsto \prod_{i \in B} x_i.$ The image of $\varphi$ is a
standard graded $k$-algebra, which is called the {\it bases monomial
ring of the matroid $M$} and it is denoted by $S_{\mM}$. By
\cite[Theorem 5]{W2}, $S_{\mM}$ has Krull dimension ${\rm
dim}(S_{\mM}) = n - c + 1$, where $c$ is the number of connected
components of $\mM$.  The number $c$ of connected components is the
largest integer $k$ such that $E$ is the disjoint union of the
nonempty sets $E_1,\ldots,E_k$ and $\mM$ is the direct sum of some
matroids $\mM_1,\ldots,\mM_k$, where $\mM_i$ has ground set $E_i$.
The kernel of $\varphi$, which is the presentation ideal of $S_M$,
is called the {\it toric ideal of $\mM$} and is denoted by $\IM$. It
is well known that $\IM$ is a prime, binomial and homogeneous ideal,
see, e.g., \cite{Sturm}. Since $R / \IM \simeq S_{\mM}$, it follows
that the height of $\IM$ is ${\rm ht}(\IM) = |\mB| - {\rm
dim}(S_{\mM})$.
\medskip

 In \cite{W1}, White posed several conjectures
concerning basis exchange properties on matroids. One of these
combinatorial conjectures turned out to be equivalent to decide if
$\IM$ is always generated by quadratics. This algebraic version of
the conjecture motivated several authors to study $\IM$. Despite
this conjecture is still open, it has been proved to be true by
means of this algebraic approach for several families of matroids
(see \cite{LasonMichalek} and the references there). Even more, it
is not even known if for every matroid its corresponding toric ideal
admits a quadratic Gr\"obner basis.
\medskip

In this paper we study the algebraic structure of toric ideals of
matroids. We study three different problems concerning $\IM$.

\subsection{Complete intersection}

The first problem is to characterize the matroids $\mM$ such that
$\IM$ is a complete intersection. The toric ideal $\IM$ is a {\it
complete intersection} if $\mu(\IM) = {\rm ht}(\IM)$, where
$\mu(\IM)$ denotes the minimal number of generators of $\IM$.
Equivalently, $\IM$ is a complete intersection if and only if there
exists a set of homogeneous binomials $g_1,\ldots,g_s \in R$ such
that $s = {\rm ht}(\IM)$ and $\IM=(g_1,\ldots,g_s)$.

\medskip

Complete intersection toric ideals were first studied by Herzog in
\cite{Herzog}. Since then, they have been extensively studied by
several authors. In the context of toric ideals associated to
combinatorial structures, the complete intersection property has
 been widely studied for graphs, see, e.g., \cite{BGRgrafos, Tatakis-Thoma, GRVega}.
In this work we address this problem in the context of toric ideals
of matroids and prove that there are essentially three matroids
whose corresponding toric ideal is a complete intersection; namely,
the rank $2$ matroids without loops or coloops on a ground set of
$4$ elements.

\subsection{Minors}

Many of the most celebrated results on matroids make reference to
minors, for this reason it is convenient to have tools to detect
whether a matroid has a certain minor or not. In this work we study
the problem of detecting whether a matroid $\mM$ has a minor
isomorphic to $\mU_{d,2d}$ with $d \geq 2$, where $\mU_{r,n}$
denotes the uniform matroid of rank $r$ on $E = \{1,\ldots,n\}$.
More precisely, we prove that whenever a matroid contains a minor
isomorphic to $\mU_{d,2d}$, then there exist $B_1, B_2 \in \mB$ such
that $\Delta_{\{B_1,B_2\}} = \binom{2d-1}{d}$; where, for every
$B_1,B_2 \in \mB$, $\Delta_{\{B_1,B_2\}}$ denotes the number of
pairs of bases $\{D_1,D_2\}$ such that $B_1 \cup B_2 = D_1 \cup D_2$
as multisets. This condition is also proved to be sufficient for $d
= 2$ and $d = 3$. Since $\mU_{2,4}$ is the only excluded minor for a
matroid to be binary, the result for $d = 2$ provides a new
criterion for detecting whether a matroid is binary. Moreover, we
provide an example to show that for $d = 5$ this condition is no
longer sufficient. These results are presented in purely
combinatorial terms, nevertheless whenever one knows a minimal set
of binomials generators of $\IM$, one can easily compute
$\Delta_{\{B_1,B_2\}}$ for all $B_1,B_2 \in \mB$. Thus, these
results give a method to detect if a matroid has a minor isomorphic
to $\mU_{2,4}$ or $\mU_{3,6}$ provided one knows a minimal set of
binomial generators of $\IM$.

\subsection{Minimal systems of generators}

Minimal systems of binomial generators of toric ideals have been
studied in several papers; see, e.g., \cite{BCMP,DS}. In general,
for a toric ideal it is possible to have more than one minimal
system of generators formed by binomials. Given a toric ideal $I$,
we denote by $\nu(I)$ the number of minimal sets of binomial
generators of $I$, where the sign of a binomial does not count. A
recent problem arising from algebraic statistics (see \cite{TA}) is
to characterize when a toric ideal $I$ possesses a unique minimal
system of binomial generators; i.e., when $\nu(I) = 1$. The problems
of determining $\nu(I)$ and characterizing when $\nu(I) = 1$ for a
toric ideal $I$ were studied in \cite{CKT, OV}, also in \cite{GO,
KO} in the context of toric ideals associated to affine monomial
curves and in \cite{OH, RTT} for toric ideals of graphs. In this
paper we also handle these problems in the context of toric ideals
of matroids. More precisely, we characterize all matroids $\mM$ such
that $\nu(\IM) = 1$. This result follows as a consequence of a lower
bound we obtain for $\nu(\IM)$. This bound turns to be an equality
whenever $\IM$ is generated by quadratics.

\medskip

 The paper is organized as follows. In the next section, we
recall how the operations of deletion and contraction on a matroid
$\mM$ reflect into $\IM$. We prove that the complete intersection
property is preserved under taking minors (Proposition \ref{minor}).
We then give a complete list of all matroids whose corresponding
toric ideal is a complete intersection (Theorem \ref{icmatroid}). To
this end, we first give such a list for matroids of rank $2$
(Proposition \ref{rango2}), which is based on results given in
\cite{BGRgrafos}. In Section \ref{secc:3}, we provide a necessary
condition for a matroid to contain a minor isomorphic to
$\mU_{d,2d}$ for $d \geq 2$ in terms of the values
$\Delta_{\{B_1,B_2\}}$ for $B_1,B_2 \in \mB$ (Proposition
\ref{minoruniforme}).  We also prove that this condition is also
sufficient when $d = 2$ or $d = 3$ (Theorems \ref{binary} and
\ref{U36minor}). Moreover, we show that this condition is no longer
sufficient for $d = 5$.  In the last section we focus on giving
formulas for the values $\mu(\IM)$ and $\nu(\IM)$. In particular, we
give a lower bound for these in terms of the values
$\Delta_{\{B_1,B_2\}}$ for $B_1,B_2 \in \mB$ (Theorem
\ref{numerosistgen}). Moreover, this lower bound turns to be exact
provided $\IM$ is generated by quadratics. Finally, we characterize
all those matroids whose toric ideal has a unique minimal binomial
generating set (Theorem \ref{unique}).

\medskip

\section{Complete intersection toric ideals of matroids}\label{sec2}

 We begin this section by setting up some notation and recalling some results about matroids which are
 useful in the sequel. For a general background on matroids we refer the reader to \cite{Oxley}.

\medskip

Let $\mM$ be a matroid on the ground set $E = \{1,\ldots,n\}$ and
rank $r$. Let $\mB$ denote the set of bases of $\mM$. By definition
$\mB$ is not empty and satisfies the following {\it exchange axiom}:
\begin{quote} For every $B_1, B_2 \in \mB$ and for every $e \in B_1 \setminus
B_2$, there exists $f \in B_2 \setminus B_1$ such that $(B_1 \cup
\{f\}) \setminus \{e\} \in \mB$. \end{quote}

\smallskip \noindent Brualdi proved in \cite{Brualdi} that the
exchange axiom is equivalent to the {\it symmetric exchange axiom}:
\begin{quote} For every $B_1, B_2$ in $\mB$ and for every $e \in B_1
\setminus B_2$, there exists $f \in B_2 \setminus B_1$ such that
both $(B_1 \cup \{f\}) \setminus \{e\} \in \mB$ and $(B_2 \cup
\{e\}) \setminus \{f\} \in \mB$.
\end{quote}

\medskip

Now we recall some basic facts and results over toric ideals of
matroids needed later on. Firstly, we observe that for
$B_1,\ldots,B_s,D_1,\ldots,D_s \in \mB$, the homogeneous binomial
$y_{B_1} \cdots y_{B_s} - y_{D_1} \cdots y_{D_s}$ belongs to $\IM$
if and only if $B_1 \cup \cdots \cup B_s = D_1 \cup \cdots \cup D_s$
as multisets. Since $\IM$ is a homogeneous binomial ideal, it
follows that $$ \IM = \big( \{y_{B_1} \cdots y_{B_s} - y_{D_1}
\cdots y_{D_s} \, \vert \, B_1 \cup \cdots \cup B_s = D_1 \cup
\cdots \cup D_s {\text\ as\ multisets } \} \big).$$ From this
expression one easily derives that whenever  $r \in \{0,1,n-1,n\}$,
then $\IM = (0)$ and $\IM$ is a complete intersection. Thus, we only
consider the case $2 \leq r \leq n - 2$.

\medskip Now we prove that the operations of taking duals, deletion, contraction and
taking minors of $\mM$ preserve the property of being a complete
intersection on $\IM$. For more details on how these operations
affect $\IM$ we refer the reader to \cite[Section 2]{Blum}.

\medskip We denote by $\mM^*$ the dual matroid of $\mM$.
It is straightforward to check that $\sigma(\IM) = I_{\mM^*}$, where
$\sigma$ is the isomorphism of $k$-algebras $\sigma: R
\longrightarrow k[y_{E \setminus B} \, \vert \, B \in \mB]$ induced
by $y_B \mapsto y_{E \setminus B}$. Thus, $\IM$ is a complete
intersection if and only if $I_{\mM^*}$ also is.

\medskip For every $A \subset E$, $\mM
\setminus A$ denotes the {\it deletion of $A$ from $\mM$} and $\mM /
A$ denotes the {\it contraction of $A$ from $\mM$}. For $E' \subset
E$, the restriction of $\mM$ to $E'$ is denoted by $\mM |_{E'}$.

\medskip

\begin{proposition}\label{minor} Let $\mM'$ be a minor of $\mM$. If
$\IM$ is a complete intersection, then $I_{\mM'}$ also is.
\end{proposition}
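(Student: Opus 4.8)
The plan is to reduce the statement to a single deletion and then to exploit the fact that deletion corresponds to passing to a face of the defining cone, where a grading argument controls the number of generators and the height at the same time.

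First I would reduce to the deletion of a single element. Every minor $\mM'$ is obtained from $\mM$ by a finite sequence of single-element deletions and contractions, so by induction it suffices to treat one deletion and one contraction. The excerpt already shows that $\IM$ is a complete intersection if and only if $I_{\mM^*}$ is; since $\mM / e = (\mM^* \setminus e)^*$, contraction of $e$ is the composite of dualizing, deleting $e$, and dualizing again, and hence contraction preserves the property as soon as deletion does. If $e$ is a loop or a coloop, then $I_{\mM \setminus e} = \IM$ up to relabelling of the variables (the bases are unchanged, respectively all lose the common element $e$), so these cases are trivial. Thus it remains to prove: if $e$ is neither a loop nor a coloop and $\IM$ is a complete intersection, then so is $I_{\mM \setminus e}$.

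For such an $e$ the bases of $\mM \setminus e$ are exactly the bases of $\mM$ avoiding $e$, and by \cite[Section 2]{Blum} one has $I_{\mM \setminus e} = \IM \cap R'$, where $R' = k[y_B \, \vert \, e \notin B] \subseteq R$. I would equip $R$ with the $\N$-grading in which $\deg(y_B) = 1$ if $e \in B$ and $\deg(y_B) = 0$ otherwise, which simply records the exponent of $x_e$ under $\varphi$. Every binomial $y_{B_1} \cdots y_{B_s} - y_{D_1} \cdots y_{D_s} \in \IM$ satisfies $B_1 \cup \cdots \cup B_s = D_1 \cup \cdots \cup D_s$ as multisets, so the two monomials contain $e$ equally often; hence every binomial of $\IM$ is homogeneous for this grading, $\IM$ is a graded ideal, and $R' = R_0$ is its degree-zero part. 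A short degree argument, using that the grading takes nonnegative values so that degrees cannot cancel, then shows that if $g_1, \dots, g_m$ are binomials generating $\IM$, the subfamily of those lying in $R'$ already generates $(\IM)_0 = I_{\mM \setminus e}$.

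Finally I would combine this with the complete intersection hypothesis. Write $\IM = (g_1, \dots, g_s)$ with $s = \mathrm{ht}(\IM)$ and each $g_i = y^{b_i^+} - y^{b_i^-}$. Let $L \subseteq \Z^{\mB}$ be the kernel lattice of $\varphi$, so that $\IM$ is its lattice ideal and $\mathrm{rank}(L) = s$; because $\IM$ has height $s$, the exponent vectors $b_i = b_i^+ - b_i^- \in L$ are $\Q$-linearly independent (otherwise $(g_1, \dots, g_s)$ would be contained in a lattice ideal of rank $< s$ and so have height $< s$). The $g_i$ lying in $R'$ generate $I_{\mM \setminus e}$ by the previous step, and their exponent vectors lie in the sublattice $L' = L \cap \Z^{W}$, with $W = \{B \, \vert \, e \notin B\}$, whose rank equals $\mathrm{ht}(I_{\mM \setminus e})$. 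Being a subfamily of linearly independent vectors, there are at most $\mathrm{rank}(L') = \mathrm{ht}(I_{\mM \setminus e})$ of them, so $\mu(I_{\mM \setminus e}) \leq \mathrm{ht}(I_{\mM \setminus e})$; as the reverse inequality always holds, $I_{\mM \setminus e}$ is a complete intersection. The delicate point is exactly this last alignment of counts: the grading is what forces the restricted generators to detect the full height of $I_{\mM \setminus e}$, and it is precisely this feature, special to face/deletion subconfigurations, that would fail for an arbitrary subfamily of the bases.
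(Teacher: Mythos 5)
Your proof is correct, and its skeleton coincides with the paper's: reduce to a single-element deletion, then obtain contraction for free by conjugating deletion with duality via $\mM / A = (\mM^* \setminus A)^*$. The difference is in how the deletion step is justified. The paper disposes of it with two citations: \cite{OHH} (or \cite[Lemma 2.2]{BGRgrafos}) for the fact that a binomial generating set $\mG$ of $\IM$ restricts to a generating set $\mG \cap k[y_B \mid e \notin B \in \mB]$ of $I_{\mM \setminus \{e\}}$, and \cite[Proposition 2.3]{BGRgrafos} for the fact that this restriction preserves the complete intersection property. You instead prove both facts from scratch: your nonnegative $\N$-grading with $\deg(y_B)=1$ exactly when $e \in B$ (so that $R' = R_0$, every binomial of $\IM$ is homogeneous, and degree-zero elements can only involve the degree-zero generators) is essentially the combinatorial-pure-subring argument behind \cite{OHH}; and your lattice-rank count --- the exponent vectors of any binomial generating set of cardinality $\mathrm{ht}(\IM)$ are forced to be $\Q$-linearly independent, and those supported on $W = \{B \mid e \notin B\}$ lie in $L \cap \Z^W$, whose rank is exactly $\mathrm{ht}(I_{\mM \setminus \{e\}})$ --- substitutes for the cited Proposition 2.3, after which Krull's height theorem closes the gap. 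What the paper's route buys is brevity; what yours buys is self-containedness, plus a transparent explanation of why the number of surviving generators cannot exceed the height of the smaller ideal, which is precisely the point the citations conceal. A small additional merit of your write-up: you treat coloops as a separate (trivial, relabelling) case, whereas the paper's restriction argument tacitly assumes $e$ is not a coloop (if $e$ were a coloop, no basis avoids $e$ and the restricted generating set would be empty, while $I_{\mM \setminus \{e\}} \cong \IM$ need not vanish); the paper covers this only through its standing reduction to matroids without loops or coloops.
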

\begin{proof}
Take $e \in E$ and let us prove that $I_{\mM \setminus \{e\}}$ is a
complete intersection. If $e$ is a loop, then $\mB$ is the set of
bases of both $\mM$ and $\mM \setminus \{e\}$ and, hence, $\IM =
I_{\mM \setminus \{e\}}$. Assume that $e$ is not a loop and take
$\mG$ a binomial generating set of $\IM$. By \cite[Lemma
2.2]{BGRgrafos} or \cite{OHH}, $I_{\mM \setminus \{e\}}$ is
generated by the set $\mG' := \mG \cap k[y_B \, \vert \, e \notin B
\in \mB]$. Hence, $I_{\mM \setminus \{e\}}$ is a complete
intersection (see \cite[Proposition 2.3]{BGRgrafos}). An iterative
application of this result proves that for all $A \subset E$,
$I_{\mM \setminus A}$ is a complete intersection.

For every $A \subset E$, it suffices to observe that $\mM / A =
(\mM^* \setminus A)^*$ to deduce that $I_{\mM  / A}$ is also a
complete intersection whenever $\IM$  is. Thus, the result follows.
\end{proof}

\medskip As we mentioned in the proof of Proposition \ref{minor}, if $e$ is a loop then $\IM = I_{\mM \setminus \{e\}}$.
Moreover, if $e$ is a coloop of $\mM$, then $\IM$ is essentially
equal to $I_{\mM / \{e\}}$. Indeed, if one considers the isomorphism
of $k$-algebras $\tau: R \longrightarrow k[y_{B \setminus \{e\}} \,
\vert \, B \in \mB]$ induced by $y_B \mapsto y_{B \setminus \{e\}}$,
then $\tau(\IM) = I_{\mM / \{e\}}$. For this reason we may assume
without loss of generality that $\mM$ has no loops or coloops.

\medskip Now we study the complete intersection property for $\IM$
when $\mM$ has rank $2$. In this case, we associate to $\mM$ the
graph $\mH_{\mM}$ with vertex set $E$ and edge set $\mB$. It turns
out that $\IM$ coincides with the toric ideal of the graph
$\mH_{\mM}$ (see, e.g., \cite{BGRgrafos}). In particular, from
\cite[Corollary 3.9]{BGRgrafos}, we have that whenever $\IM$ is a
complete intersection, then $\mH_{\mM}$ does not contain $\mK_{2,3}$
as subgraph, where $\mK_{2,3}$ denotes the complete bipartite graph
with partitions of sizes $2$ and $3$. The following result
characterizes the complete intersection property for toric ideals of
rank $2$ matroids.

\medskip

\begin{proposition}\label{rango2}Let $\mM$ be a rank $2$ matroid on a ground set of $n \geq 4$ elements
without loops or coloops. Then, $\IM$ is a complete intersection if
and only if $n = 4$.
\end{proposition}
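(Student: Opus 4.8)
The plan is to work through the translation, recalled just above the statement, between $\IM$ and the toric ideal of the graph $\mH_{\mM}$. Since $\mM$ has rank $2$ and no loops, its parallel classes partition $E$ into blocks $P_1,\dots,P_k$ with $k\geq 2$, and two elements form a basis exactly when they lie in different blocks; hence $\mH_{\mM}$ is the complete multipartite graph $K_{a_1,\dots,a_k}$ with $a_i=|P_i|$ and $\sum_i a_i=n$. An element of a block $P_i$ is a coloop precisely when it lies in every basis, which happens only if $k=2$ and $|P_i|=1$; thus the hypothesis ``no coloops'' is equivalent to requiring $k\geq 3$, or else $k=2$ with $a_1,a_2\geq 2$. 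I would record this dictionary first, together with the fact (read off from the description of the circuits, or the direct sum decomposition $\mM=\mU_{1,a_1}\oplus\mU_{1,a_2}$ when $k=2$) that such a matroid is connected, i.e.\ $c=1$, when $k\geq 3$ and has $c=2$ when $k=2$; this is what feeds the formulas $\dim S_{\mM}=n-c+1$ and ${\rm ht}(\IM)=|\mB|-\dim S_{\mM}$.

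For the implication $(\Rightarrow)$ I would argue by contraposition and show that if $n\geq 5$ then $\mH_{\mM}$ contains $\mK_{2,3}$ as a subgraph; the stated consequence of \cite[Corollary 3.9]{BGRgrafos} then gives that $\IM$ is not a complete intersection. To exhibit a copy of $\mK_{2,3}$ it suffices to find two vertices and three further vertices lying in complementary sets of blocks, so that all six cross-edges are present. I would split into cases on the block sizes. If every block is a singleton then $\mH_{\mM}=K_n$ with $n\geq 5$ and any $2+3$ disjoint vertices work. Otherwise fix a block $P_s$ with $a_s\geq 2$. If $n-a_s\geq 3$, take two vertices of $P_s$ together with any three vertices outside $P_s$. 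If instead $n-a_s\leq 2$, then the no-coloop hypothesis rules out $n-a_s\in\{0,1\}$ (the value $1$ would leave a single element outside $P_s$, forming a singleton block with $k=2$, hence a coloop), so $n-a_s=2$ and $a_s=n-2\geq 3$; now take the two vertices outside $P_s$ together with three vertices of $P_s$. In every case the required edges join distinct blocks and are present, giving $\mK_{2,3}\subseteq\mH_{\mM}$.

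For the implication $(\Leftarrow)$ I would verify directly that each matroid with $n=4$ is a complete intersection. With $n=4$ and no loops or coloops the admissible block partitions are $(2,2)$, $(2,1,1)$ and $(1,1,1,1)$, giving the graphs $K_{2,2}$, $K_4$ minus an edge, and $K_4=\mU_{2,4}$. In the first case $c=2$ and $|\mB|=4$, so ${\rm ht}(\IM)=1$ and $\IM$ is principal, generated by the single $4$-cycle binomial. In the second case $c=1$ and $|\mB|=5$, so ${\rm ht}(\IM)=1$; the lattice of relations then has rank $1$ and $\IM$ is again principal. In the third case $c=1$ and $|\mB|=6$, so ${\rm ht}(\IM)=2$, and $\IM$ is the toric ideal of $K_4$, generated by two of the three Hamiltonian $4$-cycle quadrics (the third being their difference). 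Since ${\rm ht}(\IM)\leq\mu(\IM)$ always, in all three cases exhibiting ${\rm ht}(\IM)$ generators forces $\mu(\IM)={\rm ht}(\IM)$, so $\IM$ is a complete intersection.

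The dimension bookkeeping and the explicit $\mK_{2,3}$ are routine; I expect two points to require care. First, in $(\Rightarrow)$ one must handle the ``single dominant block'' configuration, where the two vertices of the $\mK_{2,3}$ must be taken outside $P_s$ rather than inside it; this is exactly where the no-coloop hypothesis is used, to forbid a lone outside element. Second, in $(\Leftarrow)$ one must justify that the $K_4$ toric ideal is genuinely cut out by two binomials rather than needing more relations; this follows either from the characterization of complete intersection graph toric ideals in \cite{BGRgrafos} or from checking that the two displayed quadrics generate an ideal whose height equals $2$.
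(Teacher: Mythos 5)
Your proof is correct in substance but follows a genuinely different route from the paper's. For $(\Rightarrow)$, the paper chases bases with the (symmetric) exchange axiom: starting from two disjoint bases and a basis through the fifth element, it forces enough further bases to exhibit $\mK_{2,3}$ inside $\mH_{\mM}$, then invokes \cite[Corollary 3.9]{BGRgrafos} exactly as you do. Your structural shortcut --- loopless rank-$2$ matroids are precisely complete multipartite graphs $K_{a_1,\ldots,a_k}$ via parallel classes, and coloops occur exactly when $k=2$ with a singleton block --- replaces that exchange-axiom case analysis by a transparent count of where to place the $2+3$ vertices, and it makes the role of the no-coloop hypothesis explicit (ruling out $n-a_s=1$). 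For $(\Leftarrow)$, both you and the paper enumerate the same three matroids (graphs $K_{2,2}$, $K_4$ minus an edge, and $K_4$); the paper verifies the generating sets by direct computation with {\sc Singular} or {\sc CoCoA}, while you argue theoretically: a height-$1$ prime in a polynomial ring is principal (which settles the first two cases), and $I_{K_4}$ is generated by two of its three $4$-cycle quadrics, a standard fact about toric ideals of graphs (the primitive even closed walks of $K_4$ are exactly its three Hamiltonian $4$-cycles, any two triangles sharing an edge).

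One caveat: your fallback justification for the $K_4$ case --- ``checking that the two displayed quadrics generate an ideal whose height equals $2$'' --- does not suffice. A subideal $J \subseteq \IM$ with ${\rm ht}(J) = {\rm ht}(\IM)$ need not equal $\IM$; compare $(xy) \subsetneq (x)$, both of height $1$. To close that route one would need something extra, e.g.\ a degree/multiplicity comparison between $R/J$ and $R/\IM$, or the even-closed-walk description of graph toric ideals, or the direct computation the paper performs. Since your primary justifications (the structure theory of toric ideals of graphs, or the characterization in \cite{BGRgrafos}) are sound, the proof stands; just do not lean on the height check alone.
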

\begin{proof}
$(\Rightarrow)$ Assume that $n \geq 5$ and let us prove that $\IM$
is not a complete intersection. Since $\mM$ has rank $2$ and has no
loops or coloops, we may assume that it has two disjoint basis,
namely $B_1 = \{1,2\}, B_2 = \{3,4\} \in \mB.$ Moreover, $5$ is not
a coloop, so we may also assume that $B_3 = \{1,5\} \in \mB$. Since
$B_1,B_2 \in \mB$, by the symmetric exchange axiom, we can also
assume that $B_4 = \{1,3\}, B_5 = \{2,4\} \in \mB$. If $\{4,5\} \in
\mB$, then $\mH_{\mM}$ has a subgraph $\mK_{2,3}$ and $\IM$ is not a
complete intersection. Let us suppose that $\{4,5\} \notin \mB$. By
the exchange axiom for $B_2$ and $B_3$ we have $B_6 := \{3,5\} \in
\mB$. Again by the exchange axiom for $B_5$ and $B_6$ we get that
$B_7 := \{2,5\} \in \mB$. Thus, $\mH_{\mM}$ has $\mK_{2,3}$ as a
subgraph and $\IM$ is not a complete intersection.

$(\Leftarrow)$ There are three non isomorphic rank $2$ matroids
without loops or coloops and $n = 4$. Namely, $\mM_1$ with set of
bases $\mB_1 = \{\{1,2\}, \{3,4\}, \{1,3\}, \{2,4\}\}$, $\mM_2$ with
set of bases $\mB_2 = \mB_1 \cup \{\{1,4\}\}$ and $\mM_3 =
\mU_{2,4}$. For $i = 1,2$ one can easily check that ${\rm
ht}(I_{\mM_i}) = 1$ and that $I_{\mM_i} = (y_{\{1,2\}} y_{\{3,4\}} -
y_{\{1,3\}} y_{\{2,4\}})$; thus both $I_{\mM_1}$ and $I_{\mM_2}$ are
complete intersections. Moreover, ${\rm ht}(I_{\mM_3}) = 2$ and a
direct computation with {\sc Singular} \cite{DGPS} or {\sc CoCoA}
\cite{ABL} yields that $I_{\mM_3} = (y_{\{1,2\}} y_{\{3,4\}} -
y_{\{1,3\}} y_{\{2,4\}}, y_{\{1,4\}} y_{\{2,3\}} - y_{\{1,3\}}
y_{\{2,4\}})$; thus $I_{\mM_3}$ is also a complete intersection.
\end{proof}

\medskip

Now, we apply Proposition \ref{rango2} to give the list of all
matroids $\mM$ such that $\IM$ is a complete intersection.

\begin{theorem}\label{icmatroid}Let $\mM$ be a matroid without loops
or coloops and with $2 \leq r \leq n - 1$. Then, $\IM$ is a complete
intersection if and only if $n = 4$ and $\mM$ is the matroid whose
set of bases is:
\begin{enumerate}
\item $\mB = \{\{1,2\},\{3,4\},\{1,3\},\{2,4\}\},$
\item $\mB = \{\{1,2\},\{3,4\},\{1,3\},\{2,4\},
\{1,4\}\},$ or
\item $\mB = \{\{1,2\},\{3,4\},\{1,3\},\{2,4\},
\{1,4\}, \{2,3\}\}$, i.e., $\mM = \mU_{2,4}$.
\end{enumerate}
\end{theorem}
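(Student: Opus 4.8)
The plan is to prove both implications by leaning on Proposition \ref{rango2}, Proposition \ref{minor}, and the duality $\IM \cong I_{\mM^*}$. The implication $(\Leftarrow)$ is immediate: the three matroids listed all have rank $2$ and ground set of size $4$, so they fall under the $(\Leftarrow)$ part of Proposition \ref{rango2} (whose proof even exhibits the explicit complete intersection presentations). For the converse, assume $\IM$ is a complete intersection. As already noted we may restrict to $2 \le r \le n-2$ (the boundary cases give $\IM = (0)$), so in particular $n \ge 4$, and it suffices to prove $n = 4$: then $r = 2$ is forced and, up to isomorphism, the three matroids in the statement are the only rank $2$ matroids on four elements without loops or coloops (cf.\ the proof of Proposition \ref{rango2}). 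So I would suppose, for contradiction, that $n \ge 5$ and exhibit a minor of $\mM$ or of $\mM^*$ whose toric ideal is \emph{not} a complete intersection, contradicting Proposition \ref{minor} and the duality.

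First I would dispose of the extreme ranks. If $r = 2$, then $\IM$ is not a complete intersection by Proposition \ref{rango2} (as $n \ge 5 > 4$); if $r = n-2$, the same applies to $\mM^*$, which is a rank $2$ matroid on $n \ge 5$ elements without loops or coloops. For the middle range $3 \le r \le n-3$ (so $n \ge 6$) and $\mM$ \emph{connected}, I would induct on $n$. A connected matroid on at least two elements has no loops or coloops, and by a classical theorem of Tutte one of $\mM \setminus e$, $\mM / e$ is again connected; a direct check shows this connected single-element minor lives on $n-1 \ge 5$ elements with rank again in the admissible interval $[2,(n-1)-2]$, so by induction its toric ideal is not a complete intersection, whence neither is $\IM$.

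The substantial case is $\mM$ \emph{disconnected}, say $\mM = \mM_1 \oplus \cdots \oplus \mM_c$ with $c \ge 2$ connected components, each on at least two elements. Here I would use the elementary fact that a connected matroid of corank at least $2$ has a cocircuit of size at least $3$ (dually: rank at least $2$ forces a circuit of size at least $3$): writing such a cocircuit as $E_i \setminus H$ for a hyperplane $H$, contracting a basis of $H$ and deleting the resulting loops yields a minor $\mU_{1,m}$ with $m \ge 3$. Thus, if some component has corank at least $2$, combining this with a $\mU_{1,m'}$ ($m' \ge 2$) extracted from another component and contracting away the rest produces a rank $2$ minor on $m + m' \ge 5$ elements without loops or coloops, which is not a complete intersection by Proposition \ref{rango2}. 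If instead every component has corank $1$, then each component equals $\mU_{k_i-1,k_i}$ with $k_i \ge 2$, and I would pass to $\mM^* = \mU_{1,k_1} \oplus \cdots \oplus \mU_{1,k_c}$: if some $k_i \ge 3$ the component $\mU_{1,k_i}$ has corank at least $2$ and the previous argument applies to $\mM^*$; otherwise all $k_i = 2$ and $\mM \cong \mU_{1,2}^{\oplus c}$ with $c \ge 3$.

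This last, self-dual family $\mU_{1,2}^{\oplus c}$ (whose bases monomial ring is the Segre product of $c$ copies of a polynomial ring in two variables, i.e.\ the coordinate ring of $(\P^1)^c$) is the crux: it admits no rank $2$ minor on five or more loop- and coloop-free elements, so it is exactly what the rank/corank reduction cannot reach. It suffices to treat $\mU_{1,2}^{\oplus 3}$, a minor of every $\mU_{1,2}^{\oplus c}$ with $c \ge 3$; for it one has $\mathrm{ht}(\IM) = |\mB| - (n - c + 1) = 8 - 4 = 4$, while its defining ideal (that of the Segre embedding of $\P^1 \times \P^1 \times \P^1$) requires strictly more than four generators, so $\IM$ is not a complete intersection, as one checks directly or with \textsc{Singular}/\textsc{CoCoA} exactly as in Proposition \ref{rango2}. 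I expect this disconnected analysis to be the main obstacle: the connected case is clean via Tutte's theorem, but the disconnected matroids force one to confront direct sums (Segre products) and, in particular, to isolate and compute by hand the single irreducible family $\mU_{1,2}^{\oplus c}$ that is invisible to the reduction to rank $2$.
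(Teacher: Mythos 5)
Your proposal is correct, but it takes a genuinely different route from the paper's. The paper's proof of the forward direction for $r \geq 3$ is a \emph{local} argument: using the exchange and symmetric exchange axioms it produces three bases $B_1, B_2, B_3$ in a prescribed configuration, contracts $A = B_1 \cap B_2 \cap B_3$ and restricts to at most six elements, and then splits into two cases --- either one finds (after dualizing) a rank-$2$ minor on five elements without loops or coloops, which is killed by Proposition \ref{rango2}, or the minor is forced to be the transversal matroid with presentation $(\{1,6\},\{2,5\},\{3,4\})$, which is exactly your $\mU_{1,2}\oplus\mU_{1,2}\oplus\mU_{1,2}$, excluded by the same computation you invoke (height $4$, nine minimal binomial generators). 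Your proof replaces this hands-on configuration argument by \emph{global} structural matroid theory: Tutte's deletion/contraction connectivity theorem drives an induction on $n$ in the connected case, and a corank analysis of the connected components (extracting cocircuit minors $\mU_{1,m}$ by contracting a basis of a hyperplane) handles the disconnected case, isolating the family $\mU_{1,2}^{\oplus c}$ as the only one that escapes reduction to rank $2$. What the paper's approach buys is self-containedness: beyond Propositions \ref{minor} and \ref{rango2}, duality, and one finite computation it needs nothing but the exchange axioms. What yours buys is conceptual transparency: it explains \emph{why} the direct sums of copies of $\mU_{1,2}$ (whose bases monomial rings are Segre products, i.e.\ coordinate rings of $(\P^1)^c$) are precisely the irreducible obstruction; the cost is invoking Tutte's theorem and the circuit/cocircuit structure of connected matroids. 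It is worth noting that both proofs bottom out at the very same exceptional matroid and the very same explicit computation. One small remark: like the paper's own proof (which assumes $n > r+1$ from its first line), your argument establishes the statement for $2 \leq r \leq n-2$; for $r = n-1$ (e.g.\ $\mU_{3,4}$) one has $\IM = (0)$, trivially a complete intersection, so the range in the theorem's statement must indeed be read as $2 \leq r \leq n-2$, which is exactly the restriction you make at the outset.
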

\begin{proof}By Proposition \ref{rango2} it only remains to prove that $\IM$ is not a complete
intersection provided $r \geq 3$. Since $n > r + 1$ and $\mM$ has no
loops or coloops, we can take $B_1, B_2 \in \mB$ such that $|B_1
\setminus B_2| = 2$ and consider $f \in B_1 \cap B_2$. Since $f$ is
not a coloop, there exists $B' \in \mB$ such that $f \notin B'$.
Moreover, since $B_1, B' \in \mB$, by the exchange axiom there
exists $e \in B'$ such that $B_3 := (B_1 \setminus \{f\}) \cup \{e\}
\in \mB$. We observe that $|B_2 \setminus B_3| \in \{2,3\}$. Setting
$A := B_1 \cap B_2 \cap B_2$, we can assume without loss of
generality that  $f = 1$ and that  $B_1 = A \cup \{1,2,3\}$, $B_2 =
A \cup \{1,4,5\}$ and $B_3 = A \cup \{2,3,e\}$, where $e \in
\{5,6\}$. We have two cases.
\smallskip

{\it Case 1: $e = 5$}. We consider the matroid $(\mM')^*$, the dual
matroid of $\mM' := (\mM / A) | E'$, with $E' = \{1,2,3,4,5\}$. We
observe that $\{1,2,3\}, \{1,4,5\}, \{2,3,5\}$ are bases of $\mM'$
and hence $\{4,5\}, \{2,3\}, \{1,4\}$ are bases of $(\mM')^*$. Thus
$(\mM')^*$ is a rank $2$ matroid without loops or coloops and, by
Proposition \ref{rango2}, $I_{(\mM')^*}$ is not a complete
intersection. Hence, by Proposition \ref{minor}, we conclude that
$\IM$ is not a complete intersection.
\smallskip

{\it Case 2: $e = 6$}. We consider the minor $\mM' := (\mM / A) |
E'$, where $E' = \{1,2,3,4,5,6\}$ and observe that $\{1,4,5\},
\{1,2,3\}, \{2,3,6\}$ are bases of $\mM'$. By the symmetric exchange
axiom, we may also assume that $\{1,2,4\}, \{1,3,5\}$ are also bases
of $\mM'$. We claim that for every base $B$ of $\mM$, either $1 \in
B$ or $6 \in B$, but not both. Indeed, if there exists a base $B$ of
$\mM'$ such that $\{1,6\} \subset B$ then the rank $2$ matroid
$\mM_1 := \mM' / \{1\}$ on the set $E' \setminus \{1\}$ has no loops
or coloops. Thus, by Proposition \ref{rango2}, $I_{\mM_1}$ is not a
complete intersection and, by Proposition \ref{minor}, neither is
$\IM$. If there exists a base of $\mM'$ such that $1 \notin B$ and
$6 \notin B$, the rank $2$ matroid $\mM_2 := (\mM' \setminus
\{6\})^*$ on the set $E' \setminus \{6\}$ has no loops or coloops.
Thus again by Proposition \ref{rango2}, we get that $I_{\mM_1}$ is
not a complete intersection and, by Proposition \ref{minor}, neither
is $\IM$. Analogously, one can prove that for every base $B$ of
$\mM'$ either $2 \in B$ or $5 \in B$ but not both, and that either
$3 \in B$ or $4 \in B$ but not both. Hence, $\mM'$ is the
transversal matroid with presentation $(\{1,6\}, \{2,5\} ,\{3,4\})$.
Since $\mM'$ has $8$ bases and $3$ connected components, then
$I_{\mM'}$ has height $4$. Moreover, a direct computation yields
that $I_{\mM'}$ is minimally generated by $9$ binomials; thus,
$I_{\mM'}$ is not a complete intersection and the proof is finished.
\end{proof}

\section{Finding minors in a matroid}\label{secc:3}

In this section we investigate a characterization for a matroid to
contain certain minors in terms of a set of binomial generators of
its corresponding toric ideal. In particular, we focus our attention
to detect if a matroid $\mM$ contains a minor $\mU_{d,2d}$ for $d
\geq 2$. We consider the following binary equivalence relation
$\sim$ on the set of pairs of bases:

\begin{center} $\{B_1,B_2\} \sim \{B_3, B_4\} \ \Longleftrightarrow
\ B_1 \cup B_2 = B_3 \cup B_4$ as multisets,
\end{center} and we denote by $\Delta_{\{B_1,B_2\}}$ the cardinality of the equivalence
class of $\{B_1,B_2\}$.

\medskip For two sets $A,B$ we denote by $A \bigtriangleup B$ the
{\it symmetric difference} of $A$ and $B$, i.e., $A \bigtriangleup B
:= (A \setminus B) \cup (B \setminus A).$

\medskip We now introduce two lemmas concerning the values $\Delta_{\{B_1,B_2\}}$.
The first one provides some bounds on the values of
$\Delta_{\{B_1,B_2\}}$. In the proof of this lemma we use the so
called {\it multiple symmetric exchange property} (see
\cite{Woodall}):
\begin{quote}For every $B_1, B_2$ in $\mB$ and for every $A_1 \subset B_1$, there exists $A_2 \subset B_2$ such that
$(B_1 \cup A_2) \setminus A_1 \in \mB$ and $(B_2 \cup A_1) \setminus
A_2$ are in $\mB$.
\end{quote}

\medskip

\begin{lemma}\label{cotas}For every $B_1, B_2 \in \mB$, then
$2^{d-1} \leq \Delta_{\{B_1,B_2\}} \leq \binom{2d-1}{d},$ where $d
:= | B_1 \setminus B_2 |$.
\end{lemma}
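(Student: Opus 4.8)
The plan is to understand $\Delta_{\{B_1,B_2\}}$ as counting the number of ways to write the multiset $U := B_1 \cup B_2$ as an unordered union of two bases. Writing $d = |B_1 \setminus B_2|$, the common part $A := B_1 \cap B_2$ contributes to every such decomposition (elements of $A$ appear twice in $U$, and by the base-exchange structure one expects $A$ to sit inside both members of any decomposition, or at least to be handled uniformly). Thus the essential combinatorics lives on the symmetric difference $B_1 \bigtriangleup B_2$, which has $2d$ elements, split as $B_1 \setminus B_2$ and $B_2 \setminus B_1$ of size $d$ each. Any decomposition $\{D_1, D_2\}$ with $D_1 \cup D_2 = U$ corresponds, after removing the doubled elements of $A$, to a partition of the $2d$-element set $B_1 \bigtriangleup B_2$ into two $d$-subsets, each of which completes $A$ to a base.

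For the \textbf{upper bound} $\Delta_{\{B_1,B_2\}} \leq \binom{2d-1}{d}$, I would argue as follows. An ordered decomposition $(D_1,D_2)$ of $U$ into two bases is determined by choosing which $d$ of the $2d$ symmetric-difference elements go into $D_1$ (the rest, together with $A$, forming $D_2$); there are at most $\binom{2d}{d}$ such choices. Passing to unordered pairs $\{D_1,D_2\}$ halves this count to $\tfrac{1}{2}\binom{2d}{d} = \binom{2d-1}{d}$. The one subtlety is whether any decomposition is \emph{symmetric}, i.e. $D_1 = D_2$; this would happen only if $A$ could absorb a doubled symmetric-difference element, which is impossible since each element of $B_1 \bigtriangleup B_2$ appears exactly once in $U$. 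Hence every decomposition is genuinely a pair of distinct subsets being swapped, and the division by $2$ is exact, yielding the clean bound $\binom{2d-1}{d}$.

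For the \textbf{lower bound} $\Delta_{\{B_1,B_2\}} \geq 2^{d-1}$, the idea is to manufacture many valid decompositions using the multiple symmetric exchange property quoted just before the lemma. Starting from $B_1, B_2$, I would show that for each subset $A_1 \subseteq B_1 \setminus B_2$ there is a matching subset $A_2 \subseteq B_2 \setminus B_1$ with $|A_1| = |A_2|$ such that $(B_1 \cup A_2) \setminus A_1$ and $(B_2 \cup A_1) \setminus A_2$ are both bases forming a decomposition of $U$. The goal is to produce a set-up where the $2d$ exchange elements can be swapped in independent ``coordinate'' blocks, so that choosing any subset of $d-1$ independent toggles yields a distinct decomposition, giving $2^{d-1}$ of them. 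The main obstacle will be establishing this lower bound rigorously: the multiple symmetric exchange property guarantees existence of a single partner $A_2$ for each $A_1$, but to count $2^{d-1}$ distinct decompositions I must control the \emph{independence} of these exchanges and ensure different choices of $A_1$ yield distinct unordered pairs $\{D_1, D_2\}$. I expect one needs an inductive argument on $d$, or a careful reduction showing that $B_1 \bigtriangleup B_2$ can be paired off into $d$ swappable ``parallel'' pairs $\{e_i, f_i\}$ each individually exchangeable, from which $2^d$ ordered (hence $2^{d-1}$ unordered) decompositions arise; verifying that such a pairing always exists via symmetric exchange is the delicate technical core.
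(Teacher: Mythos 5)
Your \textbf{upper bound} argument is correct and is essentially the paper's: the observation that every element of $A = B_1 \cap B_2$ has multiplicity two in the multiset $B_1 \cup B_2$ and hence must lie in \emph{both} members of any decomposition (no exchange property needed here, so you should drop the hedge ``one expects $A$ to sit inside both members''), that each member therefore consists of $A$ plus a $d$-subset of the $2d$-element set $C = B_1 \bigtriangleup B_2$, and that the halving $\tfrac12\binom{2d}{d}=\binom{2d-1}{d}$ is exact because $D_1 = D_2$ is impossible. The paper phrases the same count slightly differently (fix $e \in B_1\setminus B_2$ and inject each pair into the $d$-subsets of $C \setminus \{e\}$ via its member avoiding $e$), but the two bookkeepings are equivalent.

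The genuine gap is the \textbf{lower bound}, and it is exactly where you stopped: you never prove it, and the route you sketch is not repairable. The missing idea is to fix one element $e \in B_1 \setminus B_2$ and apply the multiple symmetric exchange property \emph{separately, once for each} of the $2^{d-1}$ sets $A_1$ with $e \in A_1 \subseteq B_1 \setminus B_2$; no independence or compatibility between the different exchanges is required. Distinctness of the resulting unordered pairs is then automatic: since $e \in A_1$ and $A_2 \subseteq B_2$, the member $(B_2 \cup A_1)\setminus A_2$ is the unique member of the pair containing $e$, and its intersection with $B_1 \setminus B_2$ is exactly $A_1$, so $A_1$ is recoverable from the pair and the map $A_1 \mapsto \{(B_1\cup A_2)\setminus A_1,\,(B_2 \cup A_1)\setminus A_2\}$ is injective. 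By contrast, your fallback plan --- pairing $B_1 \bigtriangleup B_2$ into $d$ ``parallel'' pairs $\{e_i,f_i\}$, each individually symmetric-exchangeable, and toggling them independently --- fails in general: in the cycle matroid of $K_4$, writing edges as $ij$, take the complementary Hamiltonian paths $B_1 = \{12,23,34\}$ and $B_2 = \{13,14,24\}$. One checks that the only $f \in B_2$ for which both $(B_1 \setminus \{12\})\cup\{f\}$ and $(B_2\setminus\{f\})\cup\{12\}$ are spanning trees is $f = 14$ (the other choices create the triangle $124$ or $234$), and likewise $14$ is the unique symmetric-exchange partner of $34$; since $12$ and $34$ would both have to be matched to $14$, no such pairing exists even at the level of single swaps, let alone simultaneous ones. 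So the ``delicate technical core'' you defer is not merely delicate --- it is false, and the fix-$e$ injection above is what replaces it.
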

\begin{proof}Take $e \in B_1 \setminus B_2$. By the multiple symmetric exchange
property, for every $A_1$ such that $e \in A_1 \subset (B_1
\setminus B_2)$, there exists $A_2 \subset B_2$ such that both $B_1'
:= (B_1 \cup A_2) \setminus A_1$ and $B_2' := (B_2 \cup A_1)
\setminus A_2$ are bases. Since $B_1 \cup B_2 = B_1' \cup B_2'$ as
multisets, we derive that $\Delta_{\{B_1,B_2\}}$ is greater or equal
to the number of sets $A_1$ such that $e \in A_1 \subset (B_1
\setminus B_2)$, which is exactly $2^{d-1}$.

We set $A := B_1 \cap B_2$, $C := B_1 \bigtriangleup B_2$ and take
$e \in B_1 \setminus B_2$. Take $B_3,B_4 \in \mB$ such that $B_1
\cup B_2 = B_3 \cup B_4$ as multisets and assume that $e \in B_4$.
Then, $B_3 \setminus A \subset C \setminus \{e\}$ with $|B_3
\setminus A|=|B_1\setminus B_2|=d$ elements; thus,
$\Delta_{\{B_1,B_2\}} \leq \binom{2d-1}{d}$.
\end{proof}

\medskip

Moreover, the bounds of Lemma \ref{cotas} are sharp for every $d
\geq 2$. Indeed, if one considers the transversal matroid on the set
$\{1,\ldots,2d\}$ with presentation $(\{1,d+1\},\ldots,\{d,2d\})$,
and takes the bases $B_1 = \{1,\ldots,d\}$, $B_2 =
\{d+1,\ldots,2d\}$, then $|B_1 \setminus B_2| = d$ and
$\Delta_{\{B_1,B_2\}} = 2^{d-1}$. Also, if we consider the uniform
matroid $\mU_{d,2d}$ then for any base $B$ we have that
$\Delta_{\{B, E \setminus B\}} = \binom{2d-1}{d}$.

\medskip

The second lemma interprets the values of $\Delta_{\{B_1,B_2\}}$ in
terms of the number of bases-cobases of a certain minor of $\mM$.
Recall that a base $B \in \mB$ is a {\em base-cobase} if $E
\setminus B$ is also a base of $\mM$.

\medskip

\begin{lemma}\label{basecobase}Let $B_1,B_2 \in \mB$ of a matroid $\mM$ and
consider the matroid $\mM' := (\mM / (B_1 \cap B_2))|_{(B_1
\bigtriangleup B_2)}$ on the ground set $B_1 \bigtriangleup B_2$.
Then, the number of bases-cobases of $\mM'$ is equal to $2
\Delta_{\{B_1,B_2\}}$.
\end{lemma}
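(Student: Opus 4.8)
The plan is to set up an explicit bijective correspondence between the equivalence class of $\{B_1,B_2\}$ under $\sim$ and the bases-cobases of the minor $\mM'$, then account for the factor of $2$. Write $A := B_1 \cap B_2$ and $C := B_1 \bigtriangleup B_2$, so that $\mM'$ is a matroid on ground set $C$ whose bases are exactly the sets $B \setminus A$ for those bases $B$ of $\mM$ satisfying $A \subset B \subset A \cup C = B_1 \cup B_2$. The first step is to make this description of the bases of $\mM'$ precise from the definitions of contraction and restriction: a set $F \subset C$ is a base of $\mM'$ if and only if $A \cup F \in \mB$, using that $A$ is independent and $|A \cup F| = |B_1| = r$ for $|F| = d$.

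Next I would characterize the bases-cobases of $\mM'$. A set $F \subset C$ is a base-cobase of $\mM'$ precisely when both $F$ and $C \setminus F$ are bases of $\mM'$, i.e., when both $A \cup F \in \mB$ and $A \cup (C \setminus F) \in \mB$. Writing $D_1 := A \cup F$ and $D_2 := A \cup (C \setminus F)$, one checks immediately that $D_1 \cup D_2 = A \cup A \cup C = B_1 \cup B_2$ as multisets (each element of $A$ appears twice on both sides, each element of $C$ once), so the unordered pair $\{D_1, D_2\}$ lies in the equivalence class of $\{B_1, B_2\}$. Conversely, if $\{D_1, D_2\} \sim \{B_1, B_2\}$, then from $D_1 \cup D_2 = B_1 \cup B_2$ as multisets one deduces that $A \subset D_1 \cap D_2$ and that $D_1 \bigtriangleup D_2 = C$; hence $F := D_1 \setminus A \subset C$ satisfies $C \setminus F = D_2 \setminus A$, and both $A \cup F = D_1$ and $A \cup (C \setminus F) = D_2$ are bases, so $F$ is a base-cobase of $\mM'$.

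The final step is the counting. The assignment $F \mapsto \{A \cup F,\, A \cup (C \setminus F)\}$ is a well-defined map from bases-cobases of $\mM'$ to the equivalence class of $\{B_1, B_2\}$, and by the previous paragraph it is surjective. It is exactly two-to-one: the complementary base-cobase $C \setminus F$ produces the same unordered pair $\{D_1, D_2\}$, and these are the only two preimages since an unordered pair $\{D_1, D_2\}$ determines the set $\{F, C \setminus F\} = \{D_1 \setminus A,\, D_2 \setminus A\}$ but not which of the two is $F$. Thus the number of bases-cobases of $\mM'$ equals $2\,\Delta_{\{B_1,B_2\}}$.

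The main obstacle I anticipate is the bookkeeping in the two-to-one counting, specifically confirming that $F$ and $C \setminus F$ are genuinely distinct (so that the fibers really have size $2$ rather than collapsing). This requires noting that $F = C \setminus F$ would force $|C| = 2|F|$ with $F = C \setminus F$, which is impossible for a proper subset; since $d = |F| \geq 1$ and $F \neq C \setminus F$ whenever both are bases of the loopless, coloopless minor $\mM'$, the two complementary base-cobases are always distinct and the factor of $2$ is exact. I would double-check the edge behavior of the multiset equality $D_1 \cup D_2 = B_1 \cup B_2$ to ensure it genuinely forces $A \subset D_1 \cap D_2$, which is where the argument's correctness hinges.
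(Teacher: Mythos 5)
Your proposal is correct and follows essentially the same route as the paper: both set up the correspondence $F \leftrightarrow \{(B_1\cap B_2) \cup F,\ (B_1 \cap B_2)\cup (C\setminus F)\}$ between bases-cobases of $\mM'$ and pairs in the equivalence class of $\{B_1,B_2\}$, with the factor $2$ coming from complementary bases-cobases giving the same pair. The only difference is presentational --- the paper phrases it as two inequalities ($2t \leq$ and $2t \geq$ the number of bases-cobases) while you phrase it as a surjective two-to-one map, and you spell out details (the description of the bases of the minor, the multiset bookkeeping, the fiber size) that the paper leaves implicit.
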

\begin{proof}Set $t := \Delta_{\{B_1,B_2\}}$ and consider
$B_3,B_4,\ldots,B_{2t} \in \mB$ such that $B_1 \cup B_2 = B_{2i-1}
\cup B_{2i}$ as multisets for all $i \in \{1,\ldots,t\}$. Take $i
\in \{1,\ldots,t\}$, then $B_1 \cap B_2 \subset B_{2i-1}, B_{2i}
\subset B_1 \cup B_2$ and, thus, $B_{2i-1} \setminus (B_1 \cap B_2)$
and $B_{2i} \setminus (B_1 \cap B_2)$ are complementary
bases-cobases of $\mM'$. This proves that $2 t$ is less or equal to
the number of bases-cobases of $\mM'$

Conversely, take $D_1'$ a base-cobase of $\mM'$ and denote by $D_2'$
its complementary base-cobase of $\mM'$, i.e., $D_1' \cup D_2' = B_1
\bigtriangleup B_2$. Moreover, if we set $D_i := D_i' \cup (B_1 \cap
B_2) \in \mB$ for $i = 1,2$, then $D_1 \cup D_2 = B_1 \cup B_2$ as
multisets. This proves that $2 t$ is greater or equal to the number
of bases-cobases of $\mM'$.
\end{proof}

\medskip

The following result provides a necessary condition for a matroid to
have a minor isomorphic to $\mU_{d,2d}$.

\begin{proposition}\label{minoruniforme}
If $\mM$ has a minor $\mM' \simeq \mU_{d,2d}$ for some $d \geq 2$,
then there exist $B_1,B_2\in \mB$ such that $\Delta_{\{B_1,B_2\}} =
\binom{2d-1}{d}$.
\end{proposition}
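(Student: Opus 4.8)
The plan is to reduce the statement about minors to a statement about a single pair of bases, using Lemma~\ref{basecobase} as the crucial bridge. Suppose $\mM$ has a minor $\mM' \simeq \mU_{d,2d}$. By definition of minor, there exist disjoint sets $A, D \subset E$ such that $\mM' = (\mM / A)\setminus D$, where $A$ is independent and $D$ is coindependent in the appropriate sense. The key observation I would make is that Lemma~\ref{basecobase} expresses $\Delta_{\{B_1,B_2\}}$ precisely in terms of the number of bases-cobases of the minor $(\mM / (B_1 \cap B_2))|_{(B_1 \bigtriangleup B_2)}$. So the strategy is to produce a pair $B_1, B_2 \in \mB$ such that this particular minor is exactly $\mM' \simeq \mU_{d,2d}$, and then count its bases-cobases directly.

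\textbf{Constructing the pair.} The minor in Lemma~\ref{basecobase} is built by contracting $B_1 \cap B_2$ and restricting to $B_1 \bigtriangleup B_2$. I want to choose $B_1, B_2$ so that $B_1 \cap B_2 = A$ (the contracted set), $B_1 \bigtriangleup B_2 = E(\mM')$ (the ground set of the copy of $\mU_{d,2d}$), and the deleted elements $D$ fall outside this symmetric difference. Concretely, write the ground set of $\mM'$ as $\{f_1,\ldots,f_{2d}\}$; since $\mM' \simeq \mU_{d,2d}$, any $d$-subset of these is a base of $\mM'$, and in particular $B_1' := \{f_1,\ldots,f_d\}$ and $B_2' := \{f_{d+1},\ldots,f_{2d}\}$ are complementary bases of $\mM'$. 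The plan is to lift these back to $\mM$: set $B_1 := B_1' \cup A$ and $B_2 := B_2' \cup A$. The point is that bases of $\mM / A$ restricted to $E(\mM')$ correspond to bases of $\mM$ containing $A$ and avoiding $D$, so $B_1, B_2 \in \mB$. With this choice $B_1 \cap B_2 = A$ and $B_1 \bigtriangleup B_2 = \{f_1,\ldots,f_{2d}\}$, and the minor $(\mM /(B_1\cap B_2))|_{(B_1 \bigtriangleup B_2)}$ is exactly $\mM' \simeq \mU_{d,2d}$.

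\textbf{Counting and concluding.} Once the minor is identified as $\mU_{d,2d}$, I count its bases-cobases. In $\mU_{d,2d}$ every $d$-subset of the $2d$ elements is a base, and the complement of any base is again a $d$-subset, hence again a base; so \emph{every} base is a base-cobase. The number of bases is $\binom{2d}{d}$, so the number of bases-cobases is $\binom{2d}{d}$. By Lemma~\ref{basecobase} this equals $2\,\Delta_{\{B_1,B_2\}}$, giving $\Delta_{\{B_1,B_2\}} = \tfrac{1}{2}\binom{2d}{d} = \binom{2d-1}{d}$, which is exactly the desired value (matching the upper bound in Lemma~\ref{cotas} for $d = |B_1 \setminus B_2|$).

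\textbf{Main obstacle.} The routine counting is immediate; the delicate point is the lifting step — verifying that the complementary pair of bases $B_1', B_2'$ in $\mM'$ genuinely lifts to honest bases $B_1, B_2$ of the \emph{ambient} matroid $\mM$ whose intersection and symmetric difference match $A$ and $E(\mM')$ respectively. This requires the standard correspondence between bases of a contraction-restriction minor $(\mM/A)\setminus D$ and bases of $\mM$ of the form $A \cup (\text{independent set in } \mM/A)$ avoiding $D$; one should confirm that $A$ is independent and that adding a base of $\mM'$ to $A$ yields a base of $\mM$ (not merely an independent set), which follows from the rank count $r(\mM) = |A| + r(\mM')$ once $D$ is handled correctly. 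I expect this bookkeeping about how deletion and contraction interact with the base structure to be the only real content of the argument.
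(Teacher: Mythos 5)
Your proof is correct, and it reaches the conclusion by a different mechanism than the paper, although the two arguments share their key nontrivial ingredient. That shared ingredient is the lifting fact: a minor can be written as $(\mM/A)\setminus D$ with $A$ independent and the rank preserved, so that every base of $\mM'\simeq \mU_{d,2d}$ becomes a base of $\mM$ after adjoining a \emph{fixed} set; the paper invokes exactly this when it asserts the existence of $e_1,\ldots,e_{r-d}$ with $B'\cup\{e_1,\ldots,e_{r-d}\}\in\mB$ for every base $B'$ of the minor, and your ``main obstacle'' paragraph isolates the same point (note only that this is not ``by definition of minor'' but a standard lemma on reduced representations of minors, which the paper also uses without proof). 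Where you diverge is in extracting the number $\binom{2d-1}{d}$. The paper lifts all $\binom{2d}{d}/2$ complementary pairs of bases of the uniform minor, observes they all have the same multiset union, and thus gets only the inequality $\Delta_{\{B_1,B_2\}}\geq\binom{2d-1}{d}$; it then needs the upper bound of Lemma \ref{cotas} to force equality. You instead choose the single lifted pair $B_1=B_1'\cup A$, $B_2=B_2'\cup A$ so that $B_1\cap B_2=A$ and $B_1\bigtriangleup B_2$ is exactly the ground set of the minor, identify the matroid $(\mM/(B_1\cap B_2))|_{(B_1\bigtriangleup B_2)}$ of Lemma \ref{basecobase} with $\mU_{d,2d}$ itself, and read off the exact equality $2\Delta_{\{B_1,B_2\}}=\binom{2d}{d}$ because every base of $\mU_{d,2d}$ is a base-cobase. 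Your route bypasses Lemma \ref{cotas} entirely and yields equality in one step, which is arguably cleaner; the paper's sandwich argument is slightly more forgiving in that it only needs a lower bound from the construction, with Lemma \ref{cotas} absorbing any imprecision. Both are sound, and the arithmetic $\tfrac{1}{2}\binom{2d}{d}=\binom{2d-1}{d}$ is the same in each.
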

\begin{proof}Let $A,C \subset E$ be disjoint sets such that
$\mM' := (\mM \setminus A) / C \simeq \mU_{d,2d}$ and denote $E' :=
E \setminus (A \cup C)$. Since $\mM' = (\mM \setminus A) / C$, then
there exist  $e_1,\ldots, e_{r-d} \in A \cup C$ such that $B' \cup
\{e_1,\ldots,e_{r-d}\} \in \mB$ for every $B'$ base of $\mM'$
 (notice that the set $\{e_1,...e_{r-d}\}$ might not
only have elements of $C$). We take any $D \subset E'$ with $d$
elements, we have that $B_1 = D \cup \{e_1,\ldots,e_{r-d}\} \in
\mB$, $B_2 = (E' \setminus D) \cup \{e_1,\ldots,e_{r-d}\} \in \mB$
and $B_1 \cup B_2 = E' \cup \{e_1,\ldots,e_{r-d}\}$. Thus,
$\Delta_{\{B_1,B_2\}} \geq \binom{2d}{d} / 2 = \binom{2d-1}{d}.$
Since $|B_1 \setminus B_2| = d$, by Lemma \ref{cotas} we are done.
\end{proof}

\medskip
Since $\mU_{2,4}$ is the only forbidden minor for a matroid to be
binary, (see, e.g., \cite[Theorem 6.5.4]{Oxley}) the following
result gives a criterion for $\mM$ to be binary by proving the
converse of Proposition \ref{minoruniforme} for $d = 2$.

\begin{theorem}\label{binary}
$\mM$ is binary if and only if $\Delta_{\{B_1,B_2\}} \neq 3$ for
every $B_1,B_2\in \mB$.
\end{theorem}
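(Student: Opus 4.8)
The plan is to derive the theorem from the excluded-minor characterization of binary matroids together with Proposition \ref{minoruniforme} and its converse in the case $d = 2$. Recall that, as cited before the statement, a matroid is binary precisely when it has no minor isomorphic to $\mU_{2,4}$. With this in hand, one implication is essentially free: if $\mM$ is \emph{not} binary it contains a $\mU_{2,4}$ minor, and Proposition \ref{minoruniforme} with $d = 2$ immediately yields a pair $B_1, B_2 \in \mB$ with $\Delta_{\{B_1,B_2\}} = \binom{3}{2} = 3$. Contrapositively, if $\Delta_{\{B_1,B_2\}} \neq 3$ for all pairs then $\mM$ must be binary. So the real content is the reverse direction, namely the converse of Proposition \ref{minoruniforme} for $d=2$: I must show that a single pair with $\Delta_{\{B_1,B_2\}} = 3$ already forces a $\mU_{2,4}$ minor.

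To prove this, I would fix such a pair and set $d := |B_1 \setminus B_2|$. The first step is to pin down $d$ using Lemma \ref{cotas}: the chain $2^{d-1} \leq 3 \leq \binom{2d-1}{d}$ is satisfied only by $d = 2$, since the lower bound excludes $d \geq 3$ and the upper bound excludes $d \leq 1$. Consequently the associated minor $\mM' := (\mM / (B_1 \cap B_2))|_{B_1 \bigtriangleup B_2}$ is supported on the $2d = 4$ element ground set $B_1 \bigtriangleup B_2$, and it has rank $d = 2$ because $B_1 \setminus B_2$ is one of its bases.

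The heart of the argument is then a saturation count through Lemma \ref{basecobase}, which tells me that $\mM'$ has exactly $2\,\Delta_{\{B_1,B_2\}} = 6$ bases-cobases. Since $\mM'$ is a rank-$2$ matroid on four elements, every base-cobase is one of its $2$-element subsets, and there are only $\binom{4}{2} = 6$ of these in total; hence every $2$-subset of $B_1 \bigtriangleup B_2$ must be a base of $\mM'$. That is exactly the statement $\mM' \simeq \mU_{2,4}$, so $\mM$ has a $\mU_{2,4}$ minor and is not binary, as required.

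I do not expect a genuine obstacle, since Lemmas \ref{cotas} and \ref{basecobase} carry the technical load; the two points that merit care are the elementary but essential numerical step that $d$ must equal $2$ (so that the minor has the right size and rank for the count to be meaningful), and the observation that $6$ bases-cobases on a four-element rank-$2$ matroid exhaust all $\binom{4}{2}$ possible bases and thereby pin the minor down to $\mU_{2,4}$. Finally I would record explicitly that assembling the two halves uses the equivalence between being non-binary and having a $\mU_{2,4}$ minor, so that both implications combine into the claimed if-and-only-if.
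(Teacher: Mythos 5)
Your proposal is correct and follows essentially the same route as the paper: Lemma \ref{cotas} forces $d=2$, Lemma \ref{basecobase} gives the minor $6$ bases-cobases so it must be $\mU_{2,4}$, and Proposition \ref{minoruniforme} handles the converse direction. The only (harmless) difference is that you spell out the counting step $6 = \binom{4}{2}$ identifying the minor as $\mU_{2,4}$, which the paper leaves implicit.
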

\begin{proof}$(\Rightarrow)$ Assume that there exist $B_1,
B_2 \in \mB$ such that $\Delta_{\{B_1,B_2\}} = 3$. Let us denote $d
:= |B_1 \setminus B_2|$. By Lemma \ref{cotas} we observe that $d =
2$. If we set $C := B_1 \cap B_2$ and $A = E \setminus (B_1 \cup
B_2)$, then $\mM' := (\mM \setminus A) / C $ is a rank $2$ matroid
on a ground set of $4$ elements and, by Lemma \ref{basecobase}, it
has $6$ bases-cobases, thus $\mM' \simeq \mU_{2,4}$ and $\mM$ is not
binary.

$(\Leftarrow)$ Assume that $\mM$ is not binary, then $\mM$ has a
minor $\mM' \simeq \mU_{2,4}$ and the result follows from
Proposition \ref{minoruniforme}.
\end{proof}

\medskip
We also prove that the converse of Proposition \ref{minoruniforme}
also holds for $d = 3$. In order to prove this we make use of the
database of matroids available at \begin{center} {\tt
www-imai.is.s.u-tokyo.ac.jp/$\sim$ymatsu/matroid/index.html}
\end{center} which is based on \cite{MMIB}. This database includes
all matroids with $n \leq 9$ and all matroids with $n = 10$ and $r
\neq 5$.

\medskip

\begin{theorem}\label{U36minor}
$\mM$ has a minor $\mM' \simeq \mU_{3,6}$ if and only if
$\Delta_{\{B_1,B_2\}} = 10$ for some $B_1,B_2\in \mB$.
\end{theorem}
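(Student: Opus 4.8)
The forward implication is immediate: if $\mM$ has a minor isomorphic to $\mU_{3,6}$, then Proposition \ref{minoruniforme} applied with $d = 3$ produces $B_1, B_2 \in \mB$ with $\Delta_{\{B_1,B_2\}} = \binom{5}{3} = 10$. The whole content of the theorem is therefore the converse, and my plan is to reduce it to a statement about a single small minor. So suppose $\Delta_{\{B_1,B_2\}} = 10$ and set $d := |B_1 \setminus B_2|$. By Lemma \ref{cotas} we have $2^{d-1} \leq 10 \leq \binom{2d-1}{d}$; the left inequality gives $d \leq 4$ (since $2^4 = 16 > 10$) and the right one gives $d \geq 3$ (since $\binom{3}{2} = 3 < 10$), so $d \in \{3,4\}$. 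Following Lemma \ref{basecobase}, I would pass to the minor $\mM' := (\mM / (B_1 \cap B_2))|_{(B_1 \bigtriangleup B_2)}$, a matroid of rank $d$ on the $2d$-element ground set $B_1 \bigtriangleup B_2$ having exactly $2\Delta_{\{B_1,B_2\}} = 20$ bases-cobases. Since any minor of $\mM'$ is again a minor of $\mM$, it suffices to exhibit a $\mU_{3,6}$ minor inside $\mM'$.

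The case $d = 3$ is completely structural and should go through with no extra work. Here $\mM'$ is a rank $3$ matroid on $6$ elements, and its bases-cobases come in complementary pairs, so $20$ of them account for $10$ disjoint pairs, hence $20$ distinct $3$-subsets. As there are only $\binom{6}{3} = 20$ subsets of size $3$ in total, every $3$-subset is a base-cobase, and in particular a base; therefore $\mM' = \mU_{3,6}$ and this case is finished.

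The case $d = 4$ is where I expect the real difficulty, and I do not anticipate a clean forcing argument paralleling the one above: now $\mM'$ is merely a rank $4$ matroid on $8$ elements with $20$ of its $\binom{8}{4} = 70$ four-subsets being bases-cobases, which is far from pinning $\mM'$ down. (One does get for free that $\mM'$ has no loops or coloops, since a loop would lie in the complement of every base and thus destroy all bases-cobases; this trims the search but does not resolve it.) My plan is to convert this into a finite verification: it suffices to prove that \emph{every} rank $4$ matroid on $8$ elements with exactly $20$ bases-cobases has a minor isomorphic to $\mU_{3,6}$, and then apply this to $\mM'$. Because $8 \leq 9$, all such matroids occur up to isomorphism in the database of \cite{MMIB}, so I would enumerate the rank $4$ matroids on $8$ elements, discard those whose number of bases-cobases is not $20$, and check for each survivor that it contains $\mU_{3,6}$ as a minor. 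The later failure of the analogous statement for $d = 5$ (where a matroid realizes $\Delta_{\{B_1,B_2\}} = \binom{9}{5}$ yet has no $\mU_{5,10}$ minor) is evidence that no purely combinatorial forcing should be expected in general, so the computational enumeration is the natural way to settle the $d = 4$ case and complete the proof.
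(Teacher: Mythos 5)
Your proposal is correct and follows essentially the same route as the paper: the same reduction via Proposition~\ref{minoruniforme}, Lemma~\ref{cotas} and Lemma~\ref{basecobase}, the same forcing argument showing $\mM' \simeq \mU_{3,6}$ when $d = 3$, and the same finite computer enumeration of the $940$ rank-$4$ matroids on $8$ elements to dispose of $d = 4$. The only (harmless) difference is in what the computation is asked to certify: the paper's search shows that \emph{no} rank-$4$ matroid on $8$ elements has exactly $20$ bases-cobases, so the case $d = 4$ simply cannot occur, whereas your plan to check each survivor for a $\mU_{3,6}$ minor would terminate vacuously with an empty list of survivors.
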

\begin{proof}
$(\Rightarrow)$ It follows from Proposition \ref{minoruniforme}.
\smallskip

$(\Leftarrow)$ Assume that there exist $B_1, B_2 \in \mB$ such that
$\Delta_{\{B_1,B_2\}} = 10$. We denote $d := |B_1 \setminus B_2|$
and, by Lemma \ref{cotas}, we observe that $d \in \{3,4\}$. We set
$C := B_1 \cap B_2$, $A = E \setminus (B_1 \cup B_2)$ and $\mM' :=
(\mM \setminus A) / C$, the rank $d$ matroid on the ground set $E' =
(B_1 \cup B_2) \setminus C$ with $2d$ elements. Moreover, by Lemma
\ref{basecobase}, $\mM'$ has exactly 20 bases-cobases.  An
exhaustive computer aided search among the $940$ non-isomorphic rank
$4$ matroids on a set of $8$ elements proves that there does not
exist such a matroid.  Therefore $d = 3$, and $\mM'$ is a rank $3$
matroid on a ground set of $6$ elements with $20$ bases-cobases,
thus $\mM' \simeq \mU_{3,6}$.
\end{proof}

 In view of Theorems \ref{binary} and
\ref{U36minor}, one might wonder if the condition
$\Delta_{\{B_1,B_2\}} = \binom{2d-1}{d}$ for some $B_1,B_2\in \mB$
is also sufficient to have $\mU_{d,2d}$ as a minor. For $d = 4$, we
do not know it is true or not. Nevertheless, Example \ref{noU510}
shows that for $d = 5$ this is no longer true. That is to say, there
exists a matroid $\mM$ with two bases $B_1,B_2$ such that
$\Delta_{\{B_1,B_2\}} = \binom{9}{5} = 126$ and $\mM$ has not a
minor isomorphic to $\mU_{5,10}$. To prove this result we use the
fact that there exist rank $3$ matroids with exactly $k$
bases-cobases for $k = 14$ and for $k = 18$. We have found these
matroids by an exhaustive search among the $36$ non-isomorphic
matroids of rank $3$ on a set of $6$ elements.

\medskip

\begin{example}\label{noU510}Let $\mM_1, \mM_2$ be rank $3$ matroids on the sets $E_1$ and $E_2$ with
 exactly $14$ and $18$ bases-cobases respectively. Consider the matroid $\mM := \mM_1 \oplus \mM_2$, i.e., the direct
sum of $\mM_1$ and $\mM_2$. It is easy to check that $\mM$ has
exactly $14 \cdot 18 = 252$ bases-cobases. Therefore, if we take $B$
a base-cobase of $\mM$ and denote by $B'$ its complementary
base-cobase, then $\Delta_{\{B,B'\}} = 252 / 2 = 126$. Let us see
now that $\mM$ has not a minor isomorphic to $\mU_{5,10}$. Suppose
that there exist $A,B \subset E_1 \cup E_2$ such that $\mU_{5,10}
\simeq (\mM \setminus A) / B$. We observe that $A \cup B$ has two
elements and if we denote $A_i := A \cap E_i$ and $B_i := B \cap
E_i$ for $i = 1,2$, then $\mU_{5,10} \simeq (\mM \setminus A) / B =
((\mM_1 \setminus A_1) / B_1) \oplus ((\mM_2 \setminus A_2) / B_2)$,
but this is not possible since $\mU_{5,10}$ has only one connected
component.
\end{example}

\medskip
 One of the interests in Proposition \ref{minoruniforme} and
Theorems \ref{binary} and \ref{U36minor} comes from the fact that
 for every $B_1,B_2 \in \mB$, the values of
$\Delta_{\{B_1,B_2\}}$ can be directly computed from a minimal set
of generators of $\IM$ formed by binomials. The following
proposition can be obtained as a consequence of  \cite[Theorems 2.5
and 2.6]{CKT}. However, we find it convenient to include a direct
proof of this fact.

\begin{proposition}\label{valordelta} Let $\{g_1,\ldots,g_s\}$ be a minimal
set of binomial generators of $\IM$. Then,
\begin{center}$\Delta_{\{B_1,B_2\}} = 1 + |\{g_i = y_{B_{i_1}} y_{B_{i_2}} -
y_{B_{i_3}} y_{B_{i_4}} \, \vert \,$ $B_{i_1} \cup B_{i_2} = B_1
\cup B_2$ as a multiset$\}|$\end{center} for every $B_1,B_2 \in
\mB$.
\end{proposition}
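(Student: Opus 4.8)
The plan is to exploit the fine $\mathbb{N}^n$-multigrading on $R$ induced by $\varphi$ (under which $\IM$ is homogeneous) and to apply graded Nakayama to count the minimal generators lying in a single multidegree. Fix $B_1,B_2 \in \mB$ and let $\mathbf{b} \in \mathbb{N}^n$ be the multidegree of $\prod_{i \in B_1 \cup B_2} x_i$ (taken with multiplicities), so that a monomial $y_{D_1}\cdots y_{D_k} \in R$ has multidegree $\mathbf{b}$ precisely when $D_1 \cup \cdots \cup D_k = B_1 \cup B_2$ as multisets. The first step is the observation that, since $B_1 \cup B_2$ has $2r$ elements as a multiset and every base has exactly $r$ elements, any such monomial must satisfy $k = 2$. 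Hence the graded component $R_{\mathbf{b}}$ is spanned exactly by the quadratic monomials $y_{D_1} y_{D_2}$ with $D_1 \cup D_2 = B_1 \cup B_2$, and these are in bijection with the pairs $\{D_1,D_2\}$ in the equivalence class of $\{B_1,B_2\}$; thus $\dim_k R_{\mathbf{b}} = \Delta_{\{B_1,B_2\}}$.

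Next I would compute $(\IM)_{\mathbf{b}}$. All the spanning monomials of $R_{\mathbf{b}}$ are congruent modulo $\IM$ (they share the same image under $\varphi$), so $(\IM)_{\mathbf{b}}$ is exactly the hyperplane of zero-sum linear combinations of these $\Delta_{\{B_1,B_2\}}$ monomials, whence $\dim_k (\IM)_{\mathbf{b}} = \Delta_{\{B_1,B_2\}} - 1$. By graded Nakayama, the number of minimal generators of $\IM$ of multidegree $\mathbf{b}$ equals $\dim_k (\IM)_{\mathbf{b}} - \dim_k (\mathfrak{m} \IM)_{\mathbf{b}}$, where $\mathfrak{m} = (y_B \mid B \in \mB)$ is the irrelevant ideal. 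The crux of the argument is to prove that $(\mathfrak{m}\IM)_{\mathbf{b}} = 0$. For this I would write $(\mathfrak{m}\IM)_{\mathbf{b}} = \sum_{B \in \mB} y_B \cdot (\IM)_{\mathbf{b} - \mathbf{e}_B}$, where $\mathbf{e}_B$ denotes the multidegree of $y_B$; each shifted multidegree $\mathbf{b} - \mathbf{e}_B$ has total size $r$, and by the same size argument a monomial of such multidegree can only be a single variable $y_D$. Since a lone monomial cannot be congruent to a distinct one, there are no binomials in these components, so $(\IM)_{\mathbf{b} - \mathbf{e}_B} = 0$ for all $B$ and hence $(\mathfrak{m}\IM)_{\mathbf{b}} = 0$. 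This yields that $\IM$ has exactly $\Delta_{\{B_1,B_2\}} - 1$ minimal generators of multidegree $\mathbf{b}$, i.e.\ the graded Betti number $\beta_{1,\mathbf{b}}$ equals $\Delta_{\{B_1,B_2\}} - 1$.

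Finally I would translate this count back into the statement. Each binomial in $\IM$ is automatically multihomogeneous (its two monomials have equal image under $\varphi$), so a minimal binomial generating set $\{g_1,\ldots,g_s\}$ is multihomogeneous, and the number of its members of multidegree $\mathbf{b}$ is the invariant $\beta_{1,\mathbf{b}}$ computed above. A generator of multidegree $\mathbf{b}$ has both monomials in $R_{\mathbf{b}}$, hence is of the form $y_{B_{i_1}} y_{B_{i_2}} - y_{B_{i_3}} y_{B_{i_4}}$ with $B_{i_1} \cup B_{i_2} = B_1 \cup B_2$ as multisets; conversely any generator $g_i$ of this form has multidegree $\mathbf{b}$, while the remaining (higher-degree) generators do not. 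Therefore the set counted in the statement is precisely the collection of minimal generators of multidegree $\mathbf{b}$, whose cardinality is $\Delta_{\{B_1,B_2\}} - 1$, giving the identity $\Delta_{\{B_1,B_2\}} = 1 + |\{g_i \mid B_{i_1} \cup B_{i_2} = B_1 \cup B_2\}|$. The main obstacle is the vanishing $(\mathfrak{m}\IM)_{\mathbf{b}} = 0$, which is exactly where the matroid structure enters through the uniform base size $r$; everything else is bookkeeping with the multigrading.
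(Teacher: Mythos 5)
Your proof is correct, and it takes a genuinely different route from the paper's. You exploit the fine $\mathbb{N}^n$-multigrading and graded Nakayama: $R_{\mathbf{b}}$ is spanned by the $\Delta_{\{B_1,B_2\}}$ quadratic monomials of multidegree $\mathbf{b}$, the component $(\IM)_{\mathbf{b}}$ is the zero-sum hyperplane of dimension $\Delta_{\{B_1,B_2\}}-1$, and $(\mathfrak{m}\IM)_{\mathbf{b}}=0$ since each $(\IM)_{\mathbf{b}-\mathbf{e}_B}$ sits in a multidegree whose only monomial in $R$ is a single variable and $\ker\varphi$ contains no monomial (state this last point carefully: you must rule out all nonzero elements of that component, not just binomials, but since the component is spanned by one monomial this is immediate). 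Nakayama then pins down the number of degree-$\mathbf{b}$ members of every minimal multihomogeneous --- in particular binomial --- generating set as $\Delta_{\{B_1,B_2\}}-1$. This is essentially the Betti-number argument behind \cite[Theorems 2.5 and 2.6]{CKT}, which the paper explicitly cites as an alternative derivation before opting for a direct combinatorial proof: the paper instead builds the graph $\mG$ whose vertices are the pairs equivalent to $\{B_1,B_2\}$ and whose edges are the quadratic generators of multidegree $\mathbf{b}$, and shows $\mG$ is a tree --- acyclic because a cycle would give a $k$-linear relation among the $g_i$ contradicting minimality, and connected by the same multigraded-decomposition observation you use, combined with a specialization $\rho$ killing the variables outside a putative component. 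Your approach buys brevity and yields the invariance of the degree-$\mathbf{b}$ count across all minimal binomial generating sets for free, while the paper's argument stays elementary and self-contained, never invoking Nakayama or graded Betti numbers.
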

\begin{proof}Set $\mH := \{g_1,\ldots,g_s\}$ and take $B_1,B_2 \in \mB$.
Assume that $g_1,\ldots,g_t \in \mH$ are of the form $g_i =
y_{B_{i_1}} y_{B_{i_2}} - y_{B_{i_3}} y_{B_{i_4}}$ with $B_{i_1}
\cup B_{i_2} = B_1 \cup B_2$ as a multiset. We consider the graph
$\mG$ with vertices $\{B_j,B_k\} \subset \mB$ such that $B_j \cup
B_k = B_1 \cup B_2$ as multisets and, for every $i \in
\{1,\ldots,t\}$, if $g_i = y_{B_{i_1}} y_{B_{i_2}} - y_{B_{i_3}}
y_{B_{i_4}}$ then $f_i$ is the edge connecting $\{B_{i_1},B_{i_2}\}$
and $\{B_{i_3},B_{i_4}\}$. We observe that $\mG$ has
$\Delta_{\{B_1,B_2\}}$ vertices and $t$ edges; to conclude that
$\Delta_{\{B_1,B_2\}} = t + 1$ we prove that $\mG$ is a tree. Assume
that $\mG$ has a cycle and suppose that the sequence of edges
$(f_1,\ldots,f_k)$ forms a cycle. After replacing $g_i$ by $-g_i$ if
necessary, we get that $g_1 + \cdots + g_k = 0$, which contradicts
the minimality of $\mH$. Assume now that $\mG$ is not connected and
denote by $\mG_1$ one of its connected components. We take
$\{B_{j_1},B_{j_2}\}$ a vertex of $\mG_1$, $\{B_{k_1},B_{k_2}\}$ a
vertex which is not in $\mG_1$ and consider $q := y_{B_{j_1}}
y_{B_{j_2}} - y_{B_{k_1}} y_{B_{k_2}} \in \IM$. We claim that $q$
can be written as a combination of $g_1,\ldots,g_t$, i.e., $q =
\sum_{i = 1}^t q_i g_i$ for some $q_1,\ldots,q_t \in R$. Indeed, the
matroid $\mM$ induces a grading on $R$ by assigning to $y_B$ the
degree ${\rm deg}_{\mM}(y_B) := \sum_{i \in \mB} e_i \in \N^n$,
where $\{e_1,\ldots,e_n\}$ is the canonical basis of $\Z^n$.  Since
$\IM$ is a graded ideal with respect to this grading, whenever $q
\in \IM$ one may assume that $q$ can be written as a combination of
the $g_i$ such that ${\rm deg}_{\mM}(g_i)$ is componentwise less or
equal to ${\rm deg}_{\mM}(q)$. By construction of $q$, we have that
${\rm deg}_{\mM}(g_i)$ is componentwise less or equal to ${\rm
deg}_{\mM}(q)$ if and only if $i \in \{1,\ldots,t\}$ and the claim
is proved. Moreover, if we consider $\mB_1 := \cup_{\{B,B'\} \in
V(G_1)} \{B, B'\}$ and the homomorphism of $k$-algebras $\rho: R
\rightarrow k[y_B \, \vert \, B \in \mB_1]$ induced by $y_B \mapsto
y_B$ if $B \in \mB_1$, or $y_B \mapsto 0$ otherwise, then
$y_{B_{j_1}} y_{B_{j_2}} = \rho(q) = \sum_{f_i \in E(\mG_1)}
\rho(q_i) g_i$, which is not possible. Thus, we conclude that $\mG$
is connected and that $\Delta_{\{B_1,B_2\}} = t + 1$.
\end{proof}

\medskip

\section{Matroids with a unique set of binomial generators}

In general, for a toric ideal it is possible to have more than one
minimal system of generators formed by binomials. For example, as we
saw in the proof of Proposition \ref{rango2}, the matroid
$\mU_{2,4}$ is minimally generated by $\{f_1,f_2\}$, where $f_1 :=
y_{\{1,2\}} y_{\{3,4\}} - y_{\{1,3\}} y_{\{2,4\}}$ and $f_2 :=
y_{\{1,4\}} y_{\{2,3\}} - y_{\{1,3\}} y_{\{2,4\}}$; nevertheless, if
we consider $f_3 := y_{\{1,2\}} y_{\{3,4\}} - y_{\{1,4\}}
y_{\{2,3\}}$ one can easily check that $\IM$ is also minimally
generated by $\{f_1,f_3\}$ and by $\{f_2,f_3\}$. Thus,
$\mu(I_{\mU_{2,4}}) = 2$ and $\nu(I_{\mU_{2,4}}) \geq 3$.

\medskip In this section we begin by giving some bounds for the values of
$\mu(\IM)$ and $\nu(\IM)$ in terms of the values
$\Delta_{\{B_1,B_2\}}$ for $B_1,B_2 \in \mB$. Moreover, this lower
bounds turn out to be the exact values if $\IM$ is generated by
quadratics.

\begin{theorem}\label{numerosistgen}Let $R = \{ \{B_1,B_2\},\ldots,\{B_{2s-1}, B_{2s}\}\}$ be a set of
representatives of $\sim$ and set $r_i :=
\Delta_{\{B_{2i-1},B_{2i}\}}$ for all $i \in \{1,\ldots,s\}$. Then,
\begin{enumerate} \item $\mu(\IM) \geq  (b^2 - b - 2s)/ 2$, where $b := |\mB|$, and \item $\nu(\IM) \geq \prod_{i = 1}^s r_i^{\, r_i - 2}.$
\end{enumerate}
Moreover, in both cases equality holds whenever $\IM$ is generated
by quadratics.
\end{theorem}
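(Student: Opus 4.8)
The plan is to exploit the fine multigrading $\deg_{\mM}$ on $R$ (introduced in the proof of Proposition \ref{valordelta}) to decompose any minimal binomial generating set of $\IM$ according to the classes of $\sim$, and then to count, class by class, the admissible quadratic pieces by means of Cayley's formula. As a preliminary bookkeeping step I would record that the classes of $\sim$ partition the $\binom{b}{2}$ two-element subsets of $\mB$, so that $\sum_{i=1}^s r_i = \binom{b}{2}$.

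For part (1), Proposition \ref{valordelta} already does the essential work: in every minimal binomial generating set of $\IM$ the class of $\{B_{2i-1},B_{2i}\}$ contributes exactly $r_i - 1$ quadratic binomials. Hence any such set has at least $\sum_{i=1}^s (r_i - 1) = \binom{b}{2} - s = (b^2 - b - 2s)/2$ elements, which is the asserted lower bound. When $\IM$ is generated by quadratics every minimal generator is quadratic and lies in exactly one class, so the count $\sum_{i=1}^s (r_i-1)$ is attained and equality holds.

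For part (2), I would fix a class and its multidegree $\alpha_i := \deg_{\mM}(y_{B_{2i-1}} y_{B_{2i}})$. Since $\IM$ has no elements of $y$-degree at most $1$, one has $(\mathfrak{m}\,\IM)_{\alpha_i} = 0$ for the irrelevant ideal $\mathfrak{m}$, so the minimal number of degree-$\alpha_i$ generators equals $\dim_k (\IM)_{\alpha_i} = r_i - 1$, and minimality may be tested degree by degree. Every binomial of multidegree $\alpha_i$ is a difference $y_{D_1}y_{D_2} - y_{D_3}y_{D_4}$ of two of the $r_i$ monomials in the fiber, that is, an edge of the complete graph $K_{r_i}$ on the $r_i$ pairs of the class; this is precisely the graph $\mG$ analysed in the proof of Proposition \ref{valordelta}. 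Consequently a family of binomials of multidegree $\alpha_i$ minimally generates $(\IM)_{\alpha_i}$ if and only if it consists of $r_i - 1$ linearly independent edges, i.e. a spanning tree of $K_{r_i}$, of which there are exactly $r_i^{\,r_i-2}$ by Cayley's formula. Choosing a spanning tree independently in each class, while retaining a fixed system of generators in the remaining higher degrees, produces a minimal binomial generating set, and distinct tuples of trees yield distinct sets; this gives $\nu(\IM) \geq \prod_{i=1}^s r_i^{\,r_i-2}$. If $\IM$ is generated by quadratics there are no higher-degree generators, so a minimal binomial generating set is exactly a choice of one spanning tree per class, and equality holds.

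The main obstacle I anticipate is verifying that swapping spanning trees class by class genuinely yields a minimal generating set. Two points must be secured: first, that the edges of any spanning tree of $K_{r_i}$ $k$-span $(\IM)_{\alpha_i}$, so that the ideal is still generated — this follows from connectivity, since $y^u - y^v$ is the telescoping sum of edge-binomials along the unique tree path from $u$ to $v$; second, that minimality can be checked degree by degree, which rests on the vanishing $(\mathfrak{m}\,\IM)_{\alpha_i} = 0$ together with the homogeneity of $\IM$ with respect to $\deg_{\mM}$. Once these are established, the independence of the choices across distinct classes and Cayley's formula deliver both the bound and its sharpness in the quadratic case.
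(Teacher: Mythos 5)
Your proposal is correct and follows essentially the same route as the paper: part (1) is deduced from Proposition \ref{valordelta} together with $\sum_{i=1}^s r_i = \binom{b}{2}$, and part (2) counts spanning trees of the complete graph on each fiber via Cayley's formula, extending each choice of trees to a minimal binomial generating set. The only difference is cosmetic: where the paper simply asserts that the $k$-linearly independent quadrics $\mH$ extend to a minimal generating set, you justify this via the multigraded Nakayama argument ($(\mathfrak{m}\,\IM)_{\alpha_i}=0$, so minimality can be tested degree by degree), which is a welcome elaboration of the same idea rather than a different proof.
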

\begin{proof}From Proposition \ref{valordelta}, we deduce that $\mu(\IM) \geq \sum_{i = 1}^s
(\Delta_{\{B_{2i-1},B_{2i}\}} - 1)$ with equality if and only if
$\IM$ is generated by quadratics. It suffices to observe that
$\sum_{i = 1}^s \Delta_{\{B_{2i-1},B_{2i}\}} = b (b-1) / 2$ to prove
{\it (1)}.

For each $i \in \{1,\ldots,s\}$ we consider the complete graph
$\mG_i$ with vertices $\{B_{j_1},B_{j_2}\}$ such that $B_{2i-1} \cup
B_{2i} = B_{j_1} \cup B_{j_2}$ as multiset. We consider $\mT_i$ a
spanning tree of $\mG$ and define $\mH_i := \{y_{B_{j_1}}y_{B_{j_2}}
- y_{B_{j_3}} y_{B_{j_4}} \, \vert \,$ the vertices $
\{B_{j_1},B_{j_2}\}$ and $\{B_{j_3}, B_{j_4}\}$ are connected by an
edge in $\mT_i\}$ and $\mH := \cup_{i = 1}^s \mH_i$. Since $\mH$ is
formed by degree $2$ polynomials which are $k$-linearly independent,
then $\mH$ can be extended to a minimal set of generators of $\IM$.
Since $\mG_i$ has exactly $r_i$ vertices, then there are exactly
$r_i^{\, r_i-2}$ different spanning trees of $\mG_i$ that lead to
different minimal systems of generators and, thus, $\nu(\IM) \geq
\prod_{i = 1}^s r_i^{\, r_i - 2}$. Moreover, if $\IM$ is generated
by quadratics, let us see that the set $\mH$ is a set of generators
itself. Indeed, let $f \in \IM$ be a binomial of degree two, then $f
= y_{B_{k_1}} y_{B_{k_2}} - y_{B_{k_3}} y_{B_{k_4}}$. We take $i \in
\{1,\ldots,s\}$ such that $\{B_{k_1},B_{k_2}\} \simeq
\{B_{k_3},B_{k_4}\} \simeq \{B_{2i-1},B_{2i}\}$ and there exists a
path in $\mT_i$ connecting the vertices $\{B_{k_1},B_{k_2}\}$ and
$\{B_{k_3},B_{k_4}\}$, the edges in this path correspond to
binomials in $\mH$ and $f$ is a combination of these binomials.
\end{proof}

\medskip

 We end by characterizing all matroids whose
toric ideal has a unique minimal binomial generating set. We recall
that the {\em basis graph of a matroid $\mM$} is the undirected
graph $\mG_{\mM}$ with vertex set $\mB$ and edges $\{B,B'\}$ such
that $|B \setminus B'| = 1.$ We also recall that the {\em diameter
of a graph} is the maximum distance between two vertices of the
graph.

\medskip

\begin{theorem}\label{unique} Let $\mM$ be a rank $r \geq 2$ matroid. Then, $\nu(\IM) = 1$ if and only if
$\mM$ is binary and the diameter of $\mG_{\mM}$ is at most $2$.
\end{theorem}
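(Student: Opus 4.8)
The goal is to characterize when $\nu(\IM) = 1$. By Theorem \ref{numerosistgen}(2), we have $\nu(\IM) \geq \prod_{i=1}^s r_i^{\,r_i-2}$ where $r_i = \Delta_{\{B_{2i-1},B_{2i}\}}$, and this lower bound is exact when $\IM$ is generated by quadratics. The plan is to exploit this formula together with the combinatorial translation of the $\Delta$ values via Lemmas \ref{cotas} and \ref{basecobase}.

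First I would analyze when the product $\prod_{i=1}^s r_i^{\,r_i-2}$ equals $1$. The function $r \mapsto r^{\,r-2}$ equals $1$ precisely when $r \in \{1,2\}$ (since $1^{-1}=1$ and $2^0=1$), and is strictly greater than $1$ for $r \geq 3$. Hence the product is $1$ if and only if every $r_i \in \{1,2\}$, i.e. $\Delta_{\{B_1,B_2\}} \leq 2$ for all pairs of bases. The key step is then to show that the two conditions in the theorem (binary, and $\mathrm{diam}(\mG_\mM) \leq 2$) are jointly equivalent to the combination of \emph{$\IM$ is generated by quadratics} and \emph{$\Delta_{\{B_1,B_2\}} \leq 2$ for all $B_1,B_2$}. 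I would argue both implications carefully, since $\nu(\IM)=1$ forces both that the lower bound equals $1$ (so all $\Delta_{\{B_1,B_2\}} \leq 2$) and that equality is attained; but the exactness clause of Theorem \ref{numerosistgen} is only stated under the quadratic hypothesis, so establishing that $\nu(\IM)=1$ itself implies $\IM$ is generated by quadratics is the delicate point.

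To connect $\Delta_{\{B_1,B_2\}} \leq 2$ with binariness and diameter, I would use Lemma \ref{cotas}: since $2^{d-1} \leq \Delta_{\{B_1,B_2\}}$ with $d = |B_1 \setminus B_2|$, the bound $\Delta_{\{B_1,B_2\}} \leq 2$ forces $d \leq 2$ for every pair, which is exactly the statement that $\mG_\mM$ has diameter at most $2$ (distance in the basis graph equals $|B_1 \setminus B_2|$). For binariness, Theorem \ref{binary} gives that $\mM$ is binary iff $\Delta_{\{B_1,B_2\}} \neq 3$ for all pairs; once diameter is $\leq 2$, the only way to violate $\Delta \leq 2$ would be a pair with $d=2$ and $\Delta = 3$, and ruling this out is precisely binariness. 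Thus $\Delta_{\{B_1,B_2\}}\leq 2$ for all pairs is equivalent to ($\mathrm{diam}(\mG_\mM)\leq 2$ and $\mM$ binary).

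The remaining and hardest step is to verify that under these hypotheses $\IM$ is actually generated by quadratics, so that the exact equality $\nu(\IM)=\prod r_i^{\,r_i-2}=1$ applies rather than merely the inequality. For the reverse implication, I would show that when $\mathrm{diam}(\mG_\mM) \leq 2$ every binomial generator can be taken quadratic: any binomial in $\IM$ corresponds to a pair of multiset-equal products of bases, and when all pairwise base distances are at most $2$, a degree-reduction argument (repeatedly applying the symmetric exchange axiom to rewrite a high-degree relation as a sum of quadratic relations) should express every element of $\IM$ in terms of quadrics. Combined with $\Delta_{\{B_1,B_2\}}\leq 2$, the formula then yields $\nu(\IM)=1$. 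For the forward implication, if $\nu(\IM)=1$ then in particular no $\Delta_{\{B_1,B_2\}}$ exceeds $2$ (otherwise Theorem \ref{numerosistgen}(2) already gives $\nu(\IM)\geq 3^1 > 1$), which delivers both the diameter and binary conditions via the equivalences above. I expect the main obstacle to be the quadratic-generation claim under the diameter hypothesis, as it requires an explicit exchange-based rewriting of arbitrary binomials rather than a formal consequence of the earlier lemmas.
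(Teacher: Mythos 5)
Your overall skeleton matches the paper's proof: the forward direction via the lower bound of Theorem \ref{numerosistgen}(2) (any $\Delta_{\{B_1,B_2\}}\geq 3$ forces $\nu(\IM)\geq 3$), and the translation of the condition $\Delta_{\{B_1,B_2\}}\leq 2$ for all pairs into ``binary and $\mathrm{diam}(\mG_{\mM})\leq 2$'' via Lemma \ref{cotas} and Theorem \ref{binary}, are both correct and are exactly what the paper does. (Incidentally, your worry that the forward direction might need quadratic generation is unfounded, as you yourself note later: the inequality alone suffices there.)

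However, there is a genuine gap at the step you yourself flag as the hardest one: the claim that binariness together with $\mathrm{diam}(\mG_{\mM})\leq 2$ implies that $\IM$ is generated by quadratics. You only assert that ``a degree-reduction argument (repeatedly applying the symmetric exchange axiom) should express every element of $\IM$ in terms of quadrics.'' This is not a proof, and it cannot be waved through: the statement that $\IM$ is generated by quadratics for an arbitrary matroid is precisely White's conjecture, which is open, and naive exchange-based rewriting of high-degree relations is exactly the kind of argument that is not known to work in general. One must genuinely use the diameter hypothesis, and the paper does so in a specific way: it shows that $\mathrm{diam}(\mG_{\mM})\leq 2$ implies $\mM$ is \emph{strongly base orderable} (for $|B_1\setminus B_2|=2$, a single symmetric exchange produces the required bijection $\pi$ with $\pi(e_1)=f_2$, $\pi(e_2)=f_1$), and then invokes the theorem of Laso\'n and Micha{\l}ek \cite{LasonMichalek} that White's conjecture holds for strongly base orderable matroids, so $\IM$ is generated by quadratics. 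Only then does the exactness clause of Theorem \ref{numerosistgen} apply to give $\nu(\IM)=\prod r_i^{\,r_i-2}=1$. Without this reduction to strong base orderability (or some equally concrete substitute), your backward implication is incomplete.
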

\begin{proof}
$(\Rightarrow)$ By Theorem \ref{numerosistgen},we have that
$\Delta_{\{B_1,B_2\}} \in \{1,2\}$ for all $B_1, B_2 \in \mB$. By
Lemma \ref{cotas} and Theorem \ref{binary}, this is equivalent to
$\mM$ is binary and $|B_1 \setminus B_2| \in \{1,2\}$ for all $B_1,
B_2 \in \mB$. Clearly this implies that the diameter of $\mG_{\mM}$
is less or equal to $2$.
\smallskip

$(\Leftarrow)$ Assume that the diameter of $\mG_{\mM}$ is $\leq 2$,
we claim that $\mM$ is strongly base orderable. Recall that a
matroid is strongly base orderable if for any two bases $B_1$ and
$B_2$ there is a bijection $\pi: B_1 \rightarrow B_2$ such that
$(B_1 \setminus C) \cup \pi(C)$ is a basis for all $C \subset B_1$.
We take $B_1, B_2 \in \mB$ and observe that $|B_1 \setminus B_2| \in
\{1,2\}$. If $B_1 \setminus B_2 = \{e\}$ and $B_2 \setminus B_1 =
\{f\}$ if suffices to consider the bijection $\pi: B_1 \rightarrow
B_2$ which is the identity on $B_1 \cap B_2$ and $\pi(e) = f$.
Moreover, if $B_1 \setminus B_2 = \{e_1,e_2\}$ and $B_2 \setminus
B_1 = \{f_1,f_2\}$, we denote $A := B_1 \cap B_2$ and, by the
symmetric exchange axiom, we can assume that both $A \cup
\{e_1,f_1\}$ and $A \cup \{e_2,f_2\}$ are basis of $\mM$; then it
suffices to
 consider $\pi: B_1 \rightarrow B_2$ the
identity on $A$, $\pi(e_1) = f_2$ and $\pi(e_2) = f_1$ to conclude
that $\mM$ is strongly base orderable. So, by \cite[Theorem
2]{LasonMichalek}, $\IM$ is generated by quadratics. Moreover, from
Lemma \ref{cotas} and Theorem \ref{binary} we deduce that
$\Delta_{\{B_1,B_2\}} \in \{1,2\}$ for all $B_1, B_2 \in \mB$.
Hence, the result follows by Theorem \ref{numerosistgen}.
\end{proof}

\bibliographystyle{plain}

\end{document}